\documentclass[11pt]{article}
\usepackage{amsthm}

\usepackage{graphicx}

\usepackage[english]{babel}
\usepackage{amssymb}
\usepackage{amsmath}
\usepackage{enumerate}

\usepackage{epsfig}
\usepackage{subfigure}

\newcommand{\old}[1]{{}}

\newcommand{\bb}{\mathbb}
\newcommand{\conv}{\mathbf{conv}}

\newcommand{\R}{\bb R}
\newcommand{\T}{\bb T}

\newcommand{\Z}{\bb Z}

\newcommand{\intr}{\mathbf{int}}

\newcommand{\vol}{\mathbf{vol}}

\newtheorem{prop}{Proposition}
\newtheorem{theorem}[prop]{Theorem}
\newtheorem{lemma}[prop]{Lemma}

\newtheorem{claim}{Claim}
\newtheorem{cor}[prop]{Corollary}

\newtheorem{question}[prop]{Question}

\def\st{\mid}

\addtolength{\oddsidemargin}{-30pt}
\addtolength{\evensidemargin}{-30pt}
\addtolength{\textwidth}{60pt}

\addtolength{\topmargin}{-22pt}
\addtolength{\textheight}{32pt}

\begin{document}
\title{Unique Minimal Liftings for Simplicial Polytopes}

\author{Amitabh Basu\thanks{Department of Mathematics, University of California, Davis,
abasu@math.ucdavis.edu}\and
G\'erard Cornu\'ejols\thanks{Tepper School of Business, Carnegie Mellon University, Pittsburgh, gc0v@andrew.cmu.edu \newline Supported by NSF grant CMMI1024554 and ONR grant N00014-09-1-0033.}\and
Matthias K\"oppe\thanks{Department of Mathematics, University of California, Davis,
mkoeppe@math.ucdavis.edu}}
\date{March 2011, revised September 2011}
\maketitle
\begin{abstract}
 For a minimal inequality derived from a maximal lattice-free simplicial polytope in $\R^n$, we investigate the region where minimal liftings are uniquely defined, and we characterize when this region covers $\R^n$. We then use this characterization to show that a minimal inequality derived from a maximal lattice-free simplex in $\R^n$ with exactly one lattice point in the relative interior of each facet has a unique minimal lifting if and only if all the vertices of the simplex are lattice points.
\end{abstract}

\section{Introduction}

An important topic in integer programming is the generation of valid inequalities for the
Gomory-Johnson infinite relaxation, defined for $f \in \mathbb{R}^n \setminus \mathbb{Z}^n$:

\begin{equation} \label{mod:infiniteZ} \begin{array}{l}
\displaystyle{f + \sum_{r\in \R^n}r s_r} \;\; \in \mathbb{Z}^n \\
s_r\in \Z_+,\: \forall r\in \R^n,\\
s \text{ has finite support.}
\end{array}
\end{equation}

A function $\pi\colon \R^n \rightarrow \R_+$ is said to be {\em valid}
for \eqref{mod:infiniteZ} if the inequality $\sum_{r\in\R^n}\pi(r)s_r
\geq 1$ holds for every $s$ satisfying \eqref{mod:infiniteZ}. To generate valid
inequalities for \eqref{mod:infiniteZ}, one possible approach is to lift valid
inequalities for a simpler model where the variables $s_r$ are continuous:

\begin{equation} \label{mod:infiniteR} \begin{array}{l}
\displaystyle{f + \sum_{r\in \R^n}r s_r} \;\; \in \mathbb{Z}^n \\
s_r\in \R_+,\: \forall r\in \R^n,\\
s \text{ has finite support.}
\end{array}
\end{equation}

A function $\psi\colon \R^n \rightarrow \R_+$ is said to be {\em valid}
for \eqref{mod:infiniteR} if the inequality $\sum_{r\in\R^n}\psi(r)s_r
\geq 1$ holds for every $s$ satisfying \eqref{mod:infiniteR}.
Given a valid function $\psi$ for \eqref{mod:infiniteR}, a function $\pi$ is a {\em
lifting} of $\psi$ if $\pi \leq \psi$ and $\pi$ is valid for (\ref{mod:infiniteZ}).
One is interested only in (pointwise) {\em minimal} valid functions, since the non-minimal
valid functions are implied by the minimal ones.
Minimal valid functions $\psi$ for \eqref{mod:infiniteR} have a simple characterization
in terms of maximal lattice-free convex sets~\cite{BorCor}. By a {\em lattice-free convex set}, we mean a convex set that does not contain any integer point in its interior.
Specifically, \cite{BorCor, BaCoCoZaM09} show that if $B \subseteq \R^n$ is a maximal lattice-free convex set containing
$f$ in its interior, then the gauge of $B - f$ is a minimal valid
function for~\eqref{mod:infiniteR}, and every minimal valid function arises this way.
Lov\'asz~\cite{lo} showed that maximal lattice-free convex sets are polyhedra.
Thus $B$ can be uniquely written in the form
\begin{eqnarray}\label{eq:B}
B=\{\,x\in \R^n \st a^i \cdot (x-f)\le 1,\;i\in I\,\},
\end{eqnarray} where $I$ is a finite set of indices and $a^i \cdot (x-f)\leq 1$ is facet-defining for $B$ for every $i\in I$.
The gauge function of $B - f$ can then be written in the form
\begin{equation}\label{eq:psiB}
\psi(r)= \max_{i\in I} a^i \cdot r, \quad \forall r\in \mathbb{R}^n.
\end{equation}
Given a minimal valid function $\psi$ and the corresponding maximal lattice-free convex set $B$, Basu, Campelo, Conforti, Cornu\'ejols and Zambelli \cite{lifting} characterized the region $\bar R \subset \mathbb{R}^n$
where $\pi = \psi$ for every lifting $\pi$ of $\psi$. This region will henceforth be referred to as the {\em lifting region}. The case where  $\bar R + \mathbb{Z}^n$ covers $\mathbb{R}^n$ is particularly interesting since, in this case, there is a unique minimal lifting $\pi$, which can be computed using the so-called ``trivial fill-in'' procedure \cite{dw}. In fact, \cite{bcccz2} shows that $\psi$ has a unique lifting $\pi$ if and only if $\bar R+\Z^n$ covers $\R^n$.

\subsubsection*{Our Results}

Let $B$ be a maximal lattice-free convex body.
  We say that $B$ is a {\em body with a unique lifting for all $f \in \intr(B)$} if the minimal valid function $\psi$ defined by \eqref{eq:B}--\eqref{eq:psiB} has a unique minimal lifting for {\em all} $f \in \intr(B)$. We say that $B$ is a {\em body with multiple liftings for all $f \in \intr(B)$} if $\psi$ has more than one minimal lifting for {\em all} $f \in \intr(B)$. In general, $\psi$ might have a unique minimal lifting for {\em some} $f \in \intr(B)$, and multiple liftings for other $f \in \intr(B)$. We prove that this never happens when $B$ is a simplicial polytope.

\begin{theorem}\label{body-with}
Let $B$ be a maximal lattice-free simplicial polytope in $\R^n$ ($n \geq 2$). Then $B$ is either a body with a unique lifting for all $f \in \intr(B)$, or a body with multiple liftings for all $f \in \intr(B)$.
\end{theorem}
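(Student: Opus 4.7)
The plan is to combine two results from the introduction: by \cite{bcccz2}, the minimal valid function $\psi$ has a unique minimal lifting if and only if the lifting region $\bar R(f)$ satisfies $\bar R(f) + \Z^n = \R^n$; and by \cite{lifting}, $\bar R(f)$ admits an explicit polyhedral description as a finite union of pieces, one per lattice point in the relative interior of a facet of $B$. It therefore suffices to show that the covering condition $\bar R(f) + \Z^n = \R^n$ is independent of $f \in \intr(B)$.

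First I would unpack the formula for $\bar R(f)$ from \cite{lifting} in the simplicial case. Each facet $F_i$ of $B$ is an $(n-1)$-simplex and, by Lov\'asz's theorem, contains at least one lattice point in its relative interior. Writing $v_i$ for the vertex of $B$ opposite to $F_i$, one has
\begin{equation*}
\bar R(f) \;=\; \bigcup_{i} \bigcup_{x \in \relint(F_i) \cap \Z^n} R_{i,x}(f),
\end{equation*}
where $R_{i,x}(f)$ is a polytope whose vertices are expressed in terms of $x$, the vertices of $F_i$, $v_i$, and $f$.

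The key technical step is to verify that the $f$-dependence of each piece is purely translational: $R_{i,x}(f) = (x - f) + \Delta_{i,x}$, for some polytope $\Delta_{i,x}$ that depends only on $B$ and $x$. This is geometrically natural because $\psi(x - f) = 1$ for any facet lattice point $x$, and the shape of $\bar R(f)$ near $x - f$ is controlled by the local simplicial geometry of $B$ at $F_i$ (the direction from $x$ toward the opposite vertex $v_i$ and the positions of the other facet vertices relative to $x$). These local data are translation-invariant, so $\bar R(f)$ should translate rigidly with $-f$.

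Granting this form, set $\Omega := \bigcup_{i,x}(x + \Delta_{i,x})$, which depends only on $B$ and $\Z^n$. Then $\bar R(f) = \Omega - f$, and translating by $f$ yields $\bar R(f) + \Z^n + f = \Omega + \Z^n$; equivalently, $\bar R(f) + \Z^n = \R^n$ if and only if $\Omega + \Z^n = \R^n$, a condition manifestly independent of $f$. Applying the criterion of \cite{bcccz2} gives the desired dichotomy. The main obstacle is verifying the translational form from the definition in \cite{lifting}; if a direct check turns out to be inconvenient, a fallback is a connectedness argument showing that $\{f \in \intr(B) : \bar R(f) + \Z^n = \R^n\}$ is both open and closed in the connected set $\intr(B)$, using continuity of $\bar R(f)$ in $f$ (in the Hausdorff metric) and the boundedness of $\Omega$.
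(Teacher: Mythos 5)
Your setup is right---reducing to the $f$-independence of the covering condition $\bar R(f)+\Z^n=\R^n$ via the criterion of \cite{bcccz2} and the explicit description of $\bar R(f)$ from \cite{lifting} is exactly how the paper begins---but the key technical claim on which your argument rests is false. In the simplicial case the piece of $\bar R(f)$ associated with a lattice point $y^{ik}$ in the relative interior of the facet $F_i$ is the parallelotope
\[
R_{ik}-f=\Bigl\{\,\textstyle\sum_{j=1}^n\mu_j(v^{ij}-f) \;:\; 0\le\mu_j\le\lambda^{ik}_j\,\Bigr\},
\]
where only the coefficients $\lambda^{ik}_j$ (the barycentric coordinates of $y^{ik}$ in $F_i$) are independent of $f$; the edge vectors $\lambda^{ik}_j(v^{ij}-f)$ rotate and rescale as $f$ moves. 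In particular $\vol(R_{ik})=\prod_j\lambda^{ik}_j\cdot\lvert\det(v^{i1}-f,\dots,v^{in}-f)\rvert$ tends to $0$ as $f$ approaches the facet $F_i$, so $R_{ik}-f$ cannot be a translate $(y^{ik}-f)+\Delta_{i,k}$ of any fixed polytope. Consequently $\bar R(f)$ is not of the form $\Omega-f$, and the ``manifestly $f$-independent'' covering condition you derive from that form does not follow. Note also that your claim would force $\vol_{\T^n}(\bar R(f)/\Z^n)$ to be \emph{constant} on $B$, which is strictly stronger than what the paper proves (affinity in $f$, Theorem~\ref{lem:mod_vol}) and stronger than what the dichotomy requires, since the maximizing set $\{f:\vol_{\T^n}(\bar R(f)/\Z^n)=1\}$ is in general only a face of $B$, possibly a proper one.

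The fallback does not repair this. The set $\{f\in\intr(B):\bar R(f)+\Z^n=\R^n\}$ is indeed relatively closed (a Hausdorff limit of uniformly bounded covering sets still covers), but \emph{openness} is precisely the hard content of the theorem: the covering condition is the condition that the continuous function $f\mapsto\vol_{\T^n}(\bar R(f)/\Z^n)$ attains its maximum possible value $1$, and the locus where a continuous function attains its maximum need not be open. The paper's route is to prove that this volume function is \emph{affine} in $f$---by an inclusion--exclusion over intersections of the parallelotopes $R_{ik}$ and their lattice translates, discarding the lower-dimensional intersections (Lemma~\ref{lemma:no-self-intersection}) and showing each surviving term has volume affine in $f$ (Lemma~\ref{lem:linear-vol})---so that the maximizing set is a face of $B$, which either equals $B$ or is disjoint from $\intr(B)$. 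Some structural input of this kind (affinity, concavity, or a maximum principle for the volume function) is what your proposal is missing; continuity alone cannot deliver the dichotomy.
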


Theorem~\ref{body-with} raises an interesting question. The family of simplicial maximal lattice-free polytopes is such that it partitions into bodies with unique liftings for all $f$ and bodies with multiple liftings for all $f$. It is not known if this happens for general maximal lattice-free bodies.\old{ Let $\mathcal{M}$ be the family of all maximal lattice-free convex bodies. Let $\mathcal{U} \subseteq \mathcal{M}$ be the subset of bodies with unique liftings for all $f$ in the interior and $\mathcal{N} \subseteq \mathcal{M}$ be the subset of bodies with multiple liftings for all $f$ in the interior. The following question is open:

\begin{question}
Is $\mathcal{M} = \mathcal{U} \cup \mathcal{N}$?
\end{question}}
In other words, the following question is open: does there exist a maximal lattice-free convex body $B$ such that it has a unique lifting for some $f \in \intr(B)$ and has more than one lifting for some other position of $f \in \intr(B)$? Or does the uniqueness of the lifting function depend only on the geometry of the maximal lattice-free body $B$ and not on the position of $f$?

In $\mathbb{R}^2$, Dey and Wolsey \cite{dw} showed that a lattice-free triangle $B$ with exactly one integer point in the interior of each side is a body with a unique lifting for all $f \in \intr(B)$ if and only if the vertices of the triangle are themselves integer points. We generalize this result to $\R^n$.

\begin{theorem} \label{thm:main}
Let $\Delta$ be a simplex in $\R^n$ ($n \geq 2$) such that it is a maximal lattice-free convex body and each facet of $\Delta$ has exactly one integer point in its relative interior.  Then $\Delta$ is a body with a unique lifting for all $f \in \intr(\Delta)$ if and only if $\Delta$ is an affine unimodular transformation of $\conv\{0, ne^1, \ldots, ne^n\}$.
\end{theorem}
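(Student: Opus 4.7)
The plan is to combine Theorem~\ref{body-with} with the characterization from~\cite{bcccz2} that uniqueness of the minimal lifting is equivalent to $\bar R+\Z^n=\R^n$. By Theorem~\ref{body-with}, the ``for all $f$'' and ``for some $f$'' versions of uniqueness coincide on simplicial $\Delta$, so it suffices to verify the covering condition for a single convenient $f\in\intr(\Delta)$---say, the centroid.

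\textbf{Sufficiency.} Assume, after an affine unimodular change of coordinates, that $\Delta=T_n:=\conv\{0,ne^1,\ldots,ne^n\}$. The integer points in the relative interiors of the facets are then $p_{n+1}=\mathbf 1$ on the facet $\{\sum_i x_i=n\}$, and $p_i=\mathbf 1-e^i$ on $\{x_i=0\}$ for $i=1,\ldots,n$; each $p_i$ is the centroid of its facet. Using the description of $\bar R$ in~\cite{lifting} for simplicial polytopes with one integer point per facet, together with the large symmetry group of $T_n$ that cyclically permutes the $n+1$ pairs $(v_i,p_i)$, I would compute the local pieces $R_i\subseteq\bar R$ and verify directly that $\bar R+\Z^n$ covers a fundamental domain for $\Z^n$, for example the unit cube $[0,1]^n$.

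\textbf{Necessity.} Suppose $\Delta$ has unique lifting. Let $v_1,\ldots,v_{n+1}$ be the vertices, $F_i$ the facet opposite $v_i$, and $p_i$ the unique integer point in $\relint(F_i)$. Write
$$p_i\;=\;\sum_{j\neq i}\lambda_{ij}v_j,\qquad \lambda_{ij}>0,\ \sum_{j\neq i}\lambda_{ij}=1.$$
The strategy is to use the covering condition to derive a tight identity constraining the $\lambda_{ij}$'s. From the description of $\bar R$ in~\cite{lifting}, each local piece $R_i$ is a polytope inside $\Delta$ whose volume is an explicit symmetric function of the $\lambda_{ij}$'s. Using that $\Delta$ is maximal lattice-free with exactly $n+1$ boundary lattice points, I would translate $\bar R+\Z^n=\R^n$ into a volume identity that, after applying an AM-GM--type inequality under the constraint $\sum_{j\neq i}\lambda_{ij}=1$, forces $\lambda_{ij}=1/n$ for all $i\neq j$. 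In other words, each $p_i$ must be the centroid of $F_i$.

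Once the centroid condition is established, solving the linear system $p_i=\frac{1}{n}\sum_{j\neq i}v_j$ gives $v_i=\sum_jp_j-np_i\in\Z^n$, so the vertices of $\Delta$ are lattice points. To conclude the unimodular equivalence with $T_n$, I would show that the vectors $p_{n+1}-p_i$, $i=1,\ldots,n$, form a $\Z$-basis of $\Z^n$; if not, a nontrivial element of $\Z^n/\langle p_{n+1}-p_i\rangle$ would produce an additional integer point in the relative interior of some facet, contradicting the uniqueness of the $p_i$'s. The corresponding unimodular transformation sending $v_{n+1}\mapsto 0$ and $v_i\mapsto ne^i$ then carries $\Delta$ to $T_n$. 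I expect the hardest step to be the volume identity in the necessity direction---extracting the right expression from~\cite{lifting} and arguing that the covering is tight enough to apply the AM-GM bound; once this is done, the integrality of the vertices and the final lattice-basis argument are comparatively routine.
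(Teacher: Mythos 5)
Your overall framing---reduce via Theorem~\ref{body-with} and the result of \cite{bcccz2} to checking the covering condition $\vol_{\T^n}(\bar R(f)/\Z^n)=1$ for a single $f$---matches the paper, and the sufficiency direction is right in spirit (though the paper evaluates at a \emph{vertex}, where $R_0$ is literally a unimodular image of the unit cube and the check is immediate, using Theorem~\ref{lem:mod_vol} to propagate to all of $\Delta$; your centroid computation, a union of $n+1$ parallelotopes, is harder and left undone). The genuine gap is in the necessity direction. With $f$ at the vertex $v^0$, the only full-dimensional piece of the lifting region is the parallelotope $R_0$, and its volume is \emph{not} a function of the barycentric coordinates $\lambda_{0j}$ alone: from \eqref{eq:parallelotope} one gets $\vol(R_0)=n!\,\vol(\Delta)\prod_{j=1}^n\lambda_{0j}$. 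AM--GM under $\sum_j\lambda_{0j}=1$ gives $\prod_j\lambda_{0j}\le n^{-n}$, so the covering condition $\vol(R_0)\ge 1$ yields only $n!\,\vol(\Delta)\ge n^n$ --- an inequality pointing the wrong way: it cannot force equality in AM--GM. To conclude $\lambda_{0j}=1/n$ you need the reverse bound $\vol(R_0)\le 1$, and that upper bound is the entire geometric content of the theorem; nothing in your plan supplies it.

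The paper obtains this bound by reflecting $R_0$ through the lattice point $y^0=0$ to form the $0$-symmetric parallelotope $S=\{x\st -a^i\cdot y^i\le a^i\cdot x\le a^i\cdot y^i\}$ with $\vol(S)=2^n\vol(R_0)$, proving $\intr(S)\cap\Z^n=\{0\}$ by using the remaining facet $a^0\cdot x\le 0$ of $\Delta$ (for any nonzero $p\in\intr(S)\cap\Z^n$, one of $\pm p$ would land in $\intr(\Delta)$ or in the relative interior of the facet through $0$), and then applying Minkowski's convex body theorem; the equality case further requires the Minkowski--Haj\'os theorem to identify the extremal $S$ with the cube $[-1,1]^n$, from which the facet description of $\Delta$ is read off directly (the centroid condition is never used). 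Your proposal has no substitute for this geometry-of-numbers input. Secondarily, even granting the centroid condition, your final step (integer vertices plus $p_i=\frac1n\sum_{j\ne i}v_j$ implies unimodular equivalence to $\conv\{0,ne^1,\ldots,ne^n\}$) is only sketched: ruling out the case where the $p_{n+1}-p_i$ generate a proper sublattice requires invoking lattice-freeness of $\Delta$, not merely the uniqueness of the facet lattice points, and would need a real argument.
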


Note that combined with Theorem~\ref{body-with}, this theorem says that if $\Delta$ is {\em not} an affine unimodular transformation of $\conv\{0, ne^1, \ldots, ne^n\}$, then it is a body with multiple liftings for every $f \in \intr(\Delta)$.

Dey and Wolsey \cite{dw} also showed that a lattice-free triangle $B \subset
\mathbb{R}^2$ with a side containing more than one integer point in its
interior is a body with a unique lifting for all $f \in \intr(B)$.  The story
for simplices with multiple integer points in the relative interior of the
facets is substantially more complicated in higher dimensions. In fact, there
are examples of simplices in $\R^n$ for every $n\geq 3$ with multiple lattice
points in the interior of a facet and whose lifting region does not cover
$\R^n$ by lattice translations, implying that it is a body with multiple
liftings for all $f$ in the interior.

We prove the following interesting
theorem for a special class of simplices, which generalizes the results in
$\R^2$.  At the same time, it gives an infinite family of simplices for every $n\geq 3$ with multiple integer points in the relative interior of a facet whose lifting regions do not cover $\R^n$ by lattice translations. Explicit constructions for such examples are shown in Section~\ref{sec:construction}. We define a maximal lattice-free convex body $B$ to be {\em 2-partitionable} with hyperplanes $H_1, H_2$ if $H_1, H_2$ are two adjacent lattice hyperplanes and all the lattice points on the boundary of $B$ lie on $H_1 \cup H_2$.  This notion was utilized by Basu, Cornu\'ejols and Margot to characterize split rank for minimal inequalities in~\cite{bcm} and turns out to be an important notion.

\begin{theorem} \label{thm:multiple}
Let $\Delta \subset \R^{n+1}$ be a maximal lattice-free 2-partitionable simplex with hyperplanes $H_1, H_2$ such that $H_1$ defines a facet of $\Delta$ and this is the only facet of $\Delta$ with more than one lattice point in its relative interior. Then $\Delta$ is a body with a unique lifting for all $f \in \intr(\Delta)$ if and only if $\Delta \cap H_2$ is an affine unimodular transformation of $\conv\{0, ne^1, \ldots, ne^n\}$.
\end{theorem}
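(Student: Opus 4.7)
The strategy is to reduce the theorem to the already-proven Theorem~\ref{thm:main}, applied to the $n$-dimensional slice $\Delta \cap H_2$.

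First I would analyze that slice. Since $\Delta$ is an $(n+1)$-simplex it has $n+2$ facets, and by hypothesis only $F_1 \subset H_1$ contains multiple interior lattice points; so each of $F_2,\ldots,F_{n+2}$ has exactly one interior lattice point, and by 2-partitionability these $n+1$ points all lie on $H_2$. Distinct relative-interior-of-facet points cannot all lie in a single facet hyperplane of a simplex, so $H_2$ is not itself facet-defining, and it strictly separates the vertex of $\Delta$ opposite $F_1$ from $F_1$. Consequently $\Delta \cap H_2$ is an $n$-simplex in $H_2 \cong \R^n$ with facets $F_i \cap H_2$ ($i=2,\ldots,n+2$); since the unique lattice point in $\relint(F_i)$ is not on $\bd F_i$, it lies in $\relint(F_i \cap H_2)$. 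Thus $\Delta \cap H_2$ is a maximal lattice-free $n$-simplex with exactly one interior-of-facet lattice point per facet, matching the hypothesis of Theorem~\ref{thm:main}.

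The crux of the argument is then to prove the equivalence
\[
\bar R_\Delta + \Z^{n+1} = \R^{n+1} \;\Longleftrightarrow\; \bar R_{\Delta \cap H_2} + \Lambda = H_2,
\]
where $\Lambda := \Z^{n+1} \cap (H_2 - H_2)$ is the natural $n$-dimensional sublattice inside $H_2$. By Theorem~\ref{body-with} the left-hand covering is independent of $f \in \intr(\Delta)$, so I may fix $f \in \relint(\Delta \cap H_2)$. Choose coordinates in which $H_1 = \{x_{n+1}=-1\}$ and $H_2 = \{x_{n+1}=0\}$; integer translation by $e_{n+1}$ then shifts one lattice slab to the next. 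Using the description of the lifting region from~\cite{lifting} in terms of regions attached to the boundary lattice points $p$ of $\Delta$, I split the contributions according to $p \in F_1$ versus $p \in H_2$. Then, using 2-partitionability exactly as exploited in~\cite{bcm}, the $F_1$-contributions turn out to be redundant modulo $e_{n+1}$-translates of the $H_2$-contributions. This collapses the covering of $\R^{n+1}$ to the covering of $H_2 \cong \R^n$ by $\Lambda$-translates of $\bar R_{\Delta\cap H_2}$. Applying Theorem~\ref{thm:main} to $\Delta \cap H_2$ and the covering-equals-unique-lifting criterion recalled in the introduction then yields the theorem.

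The main obstacle is this second step. Translating $\Delta$ by $e_{n+1}$ turns ``lattice points of $\Delta$ on $H_2$'' into ``lattice points of the translate on $H_1$'', and 2-partitionability is precisely the condition ensuring that no other boundary lattice points interfere with this identification. Making the claimed redundancy precise requires a careful dissection of the piecewise-linear gauge $\psi$ into its ``slab-transverse'' and ``$H_2$-parallel'' components and verifying that the two pieces glue together so as to reduce the $(n+1)$-dimensional covering question exactly to the $n$-dimensional one.
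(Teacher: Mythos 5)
Your setup is sound: $\Delta \cap H_2$ is indeed a maximal lattice-free $n$-simplex with exactly one lattice point in the relative interior of each facet, so Theorem~\ref{thm:main} applies to it, and Theorem~\ref{body-with} does allow you to test uniqueness at a single convenient $f$. But the heart of your argument --- the equivalence $\bar R_\Delta + \Z^{n+1} = \R^{n+1} \Leftrightarrow \bar R_{\Delta\cap H_2} + \Lambda = H_2$, obtained by declaring the $F_1$-contributions ``redundant modulo $e^{n+1}$-translates of the $H_2$-contributions'' --- is asserted rather than proved, and as you yourself concede, making it precise is ``the main obstacle.'' That obstacle is the entire content of the theorem. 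Worse, the redundancy claim is doubtful as stated: with $f$ in the relative interior of $\Delta \cap H_2$, every facet of $\Delta$ contributes full-dimensional parallelotopes whose generating cones are $(n+1)$-dimensional, so none of them is a cylinder over anything in $H_2$ and they tilt out of the slab in $f$-dependent ways; moreover, in the regime where the covering actually has to be verified, the parallelotopes attached to lattice points on the facet in $H_1$ are precisely the ones that do the covering, not the dispensable ones.

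The paper instead evaluates at the vertices of $\Delta$, where the geometry degenerates favorably. For a vertex $\hat v$ of the facet $F_{n+1} \subset H_1$, only the facet opposite $\hat v$ contributes a full-dimensional parallelotope, and that parallelotope is a genuine height-$1$ cylinder over the lifting region of $\Delta\cap H_2$ at the corresponding vertex $\bar v$; Theorem~\ref{thm:main} then settles all such vertices in both directions. The remaining vertex (the apex not on $F_{n+1}$) is where the real work lies: one must first prove that $F_{n+1}$ contains a lattice translate of $S_0 = \Delta\cap H_2$ (this uses lattice-freeness of $\Delta$ via an auxiliary integer point to force $\delta_{n+1} \geq -\sum_{i=1}^n \lfloor \delta_i\rfloor$), normalize by a unimodular transformation so that all $\delta_i \geq 0$, and then exhibit, above each point of a full-measure covering family $D_0,\dots,D_n$ in $H_2$, an explicit vertical segment of height $1$ contained in concrete parallelotopes $R_{ik}$ attached to lattice points of $S_0 - e^{n+1}$. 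Without an argument of this kind --- or an actual proof of your redundancy claim --- the proposal does not establish the theorem.
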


Again note that combined with Theorem~\ref{body-with}, this theorem says that if $\Delta \cap H_2$ is {\em not} an affine unimodular transformation of $\conv\{0, ne^1, \ldots, ne^n\}$, then $\Delta$ is a body with multiple liftings for every $f \in \intr(\Delta)$.

To prove these results,  we will need to investigate when the region $\bar R$ mentioned earlier has the property that $\bar R+\Z^n$ covers $\R^n$. For this purpose, we consider the torus $\T^n = \R^n/\Z^n$, equipped with the natural Lebesgue measure that assigns volume~1 to~$\R^n/\Z^n$.  It is clear that a
compact set $X\subseteq\R^n$ covers $\R^n$ by lattice translations, i.e.,
$X+\Z^n = \R^n$, if and only if $\vol_{\T^n} (X/\Z^n) = 1$, where $\vol_{\T^n}
(X/\Z^n)$ denotes the volume of $X/\Z^n$ in $\T^n$. (This is not necessarily
true for sets $X$ that are merely closed, as in this case $X/\Z^n$ is not necessarily closed.)

Let $B \subset \R^n$ be a maximal lattice-free simplicial polytope. We will investigate how the lifting region $\bar R$ varies as a function of $f$, for points $f \in \intr(B)$. Therefore it will be convenient to denote the lifting region by $\bar R(f)$ in the remainder.
One of the key ingredients in proving the above theorems is to show that $\vol_{\T^n}(\bar R(f)/\Z^n)$ is an affine function of the coordinates of $f$. This reduces the problem of checking the covering properties of $\bar R(f)$ when $f$ is at the vertices of $B$.

\begin{theorem} \label{lem:mod_vol}
Let $B \subset \R^n$ be a maximal lattice-free simplicial polytope and let $f \in \intr(B)$. Then $\vol_{\T^n}(\bar R(f)/\Z^n)$ is an affine function of the coordinates of $f$.
\end{theorem}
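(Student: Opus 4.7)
The plan is to start from the explicit characterization of the lifting region from~\cite{lifting}. For a maximal lattice-free simplicial polytope $B$, each facet $F_i$ is itself a simplex with exactly $n$ vertices $v^{i,1},\ldots,v^{i,n}$, and this characterization should yield a decomposition
\[
\bar R(f) \;=\; \bigcup_{i}\, \bigcup_{w \in F_i \cap \Z^n} R_{i,w}(f),
\]
where each piece $R_{i,w}(f)$ is a simplex whose vertices depend affinely on $f$; I expect the form $R_{i,w}(f) = w + \conv\{0, f - v^{i,1}, \ldots, f - v^{i,n}\}$. A useful preliminary observation is that $\vol(R_{i,w}(f))$ is itself an affine function of $f$: its value equals $\tfrac{1}{n!}|\det(f - v^{i,1},\ldots,f - v^{i,n})|$, and although this determinant is a priori polynomial of degree $n$ in $f$, antisymmetry of the determinant forces all higher-degree monomials to cancel, leaving an affine function. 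The classical barycentric identity $\vol(B) = \sum_i \vol(\conv(\{f\} \cup F_i))$ then expresses the constant $\vol(B)$ as a sum of individually affine pieces.

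Next I would pass to the torus and write $\vol_{\T^n}(\bar R(f)/\Z^n) = 1 - \vol_{\T^n}(U(f))$, where $U(f) \subseteq \T^n$ is the uncovered region. On each open region in $f$-space throughout which the combinatorial type of $U(f)$ is constant, the vertices of $U(f)$ are piecewise-affine functions of $f$, so $\vol_{\T^n}(U(f))$ is polynomial in $f$ of degree at most $n$ on each such combinatorial cell. Continuity of the torus volume across the walls separating adjacent cells (which follows from Hausdorff continuity of $\bar R(f)$ in $f$) means that once affinity is established within each single cell, global affinity on $\intr(B)$ follows.

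The main obstacle is thus to rule out higher-degree monomials on each combinatorial cell: even though each individual $\vol(R_{i,w}(f))$ is affine, the overlap volumes produced by the quotient by $\Z^n$ can a priori contribute higher-degree terms. My strategy is to compute each partial derivative $\partial_{f_k}\vol_{\T^n}(\bar R(f)/\Z^n)$ via a divergence-theorem argument: because every vertex of every simplex $R_{i,w}(f)$ either stays fixed or moves with unit speed in a single coordinate direction as $f$ varies, the derivative can be written as a signed sum of $(n-1)$-volumes of certain facets of the pieces $R_{i,w}(f)$ as they slide across the torus. The simplicial hypothesis should force these facet areas to be constant on a combinatorial cell -- each such facet being a translate of a fixed $(n-1)$-simplex determined by a facet of $B$ alone -- making the gradient constant and proving that $\vol_{\T^n}(\bar R(f)/\Z^n)$ is affine in $f$.
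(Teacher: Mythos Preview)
Your sketch has two concrete gaps.

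First, the pieces $R_{i,w}(f)$ are not simplices.  From \cite{lifting} (and as recalled in~\eqref{eq:parallelotope}), each piece is the parallelotope
\[
R_{ik} \;=\; (f+C_i)\cap(y^{ik}-C_i) \;=\; \Bigl\{\, f + \sum_{j=1}^n \mu_j(v^{ij}-f)\;:\; 0\le\mu_j\le\lambda^{ik}_j\,\Bigr\},
\]
where $\lambda^{ik}_j$ are the barycentric coordinates of $y^{ik}$ in the $i$-th facet.  The vertex corresponding to a subset $S\subseteq\{1,\dots,n\}$ moves with velocity $\bigl(1-\sum_{j\in S}\lambda^{ik}_j\bigr)\dot f$, so ``every vertex either stays fixed or moves with unit speed'' is false: the $2^n$ vertices move at $2^n$ different speeds, and the facets of $R_{ik}$ (spanned by subsets of the vectors $v^{ij}-f$) genuinely change shape as $f$ varies.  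The claim that each relevant facet is ``a translate of a fixed $(n-1)$-simplex determined by a facet of~$B$ alone'' therefore fails already for a single piece, before any torus quotient is taken.

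Second, and more fundamentally, the transport argument does not address the real difficulty: controlling the overlap terms that the quotient by $\Z^n$ produces.  In the inclusion--exclusion expansion for $\vol_{\T^n}(R(f)/\Z^n)$ one has to deal with intersections $(R_{i_1k_1}+t_1)\cap(R_{i_2k_2}+t_2)\cap\dots$ for arbitrary $t_j\in\Z^n$.  There is no a priori reason such a polytope should have affine volume in~$f$, and your facet-area heuristic gives no handle on it.  The paper earns affinity through two geometric lemmas that your plan lacks:
\begin{itemize}
\item Lemma~\ref{claim:i1i2}: for $i_1\ne i_2$, no lattice translate of $R_{i_1}$ meets $\intr(R_{i_2})$, so the facets decouple and $\vol_{\T^n}(R(f)/\Z^n)=\sum_i\vol_{\T^n}(R_i/\Z^n)$.
\item Lemma~\ref{lemma:no-self-intersection}: within a fixed~$i$, the only translates $R_i+t$ meeting $\intr(R_i)$ have $t$ in the sublattice $\Lambda_i=\{t\in\Z^n:a^i\cdot t=0\}$ parallel to the $i$-th facet.
\end{itemize}
The second lemma is decisive: since $\Lambda_i$ lies in the span of the facet, each $t\in\Lambda_i$ has coordinates in the basis $(v^{ij}-f)_j$ that are \emph{independent of~$f$}.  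Hence every surviving intersection in the inclusion--exclusion formula is again a parallelotope of the form~\eqref{eq:elem-paras} with $f$-independent bounds on the $\mu_j$'s, and its volume equals $\lvert\det(v^{i1}-f,\dots,v^{in}-f)\rvert$ times a constant---affine in~$f$ by the same cancellation you already observed.  Without these two lemmas you have no mechanism to rule out higher-degree contributions from the overlap terms, and the divergence-theorem step remains a hope rather than an argument.
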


The proof of this theorem is the main task of Section~\ref{sec:simplicial}.
Theorem~\ref{body-with} will follow as a corollary.
Theorem~\ref{thm:main} will be proved in Section~\ref{sec:one-latt} and Theorem~\ref{thm:multiple} in Section~\ref{sec:multi}.

\section{Lifting Regions for Simplicial Polytopes}\label{sec:simplicial}

Let $B \subset \R^n$ be a maximal lattice-free simplicial polytope and let $f \in \intr(B)$.
We express $B$  as in \eqref{eq:B}.
For $i \in I$, we denote by $y^{ik}$ the integer points lying on the facet of $B$ defined by $a^i(x-f) \leq 1$, where $k \in K_i$ for some index set $K_i$. Since $B$ is simplicial, each facet has $n$ vertices; we denote these vertices by $v^{ij}$, $j = 1,\ldots, n$, $i\in I$. For $i\in I$, let $C_i$ be the simplicial cone generated by the rays $v^{ij} - f$, $j = 1, \ldots, n$. Let $R_{ik} = (f + C_i) \cap (y^{ik} - C_i)$. Define

\begin{equation}\label{eq:R} R(f) = \bigcup_{i \in I} \bigcup_{k \in K_i} R_{ik}.
\end{equation}

It was shown in \cite{lifting} that the lifting region $\bar R(f) = R(f) - f$. The lifting region $\bar R(f)$ can thus be obtained as the union of the parallelotopes $R_{ik} - f$, $i \in I$, $k \in K_i$ for simplicial polytopes. Since $\bar R(f)$ is a translation of $R(f)$, $\R^n$ is covered by lattice translates of $\bar R(f)$ if and only if $\R^n$ is covered by lattice translates of $R(f)$. Therefore, we will investigate the covering properties of $R(f)$ as defined in \eqref{eq:R} in the rest of the paper.

\begin{figure}
\centering \subfigure[A maximal lattice-free triangle with three integer points] {\label{fig:type_3}
\includegraphics[height=2.5in]{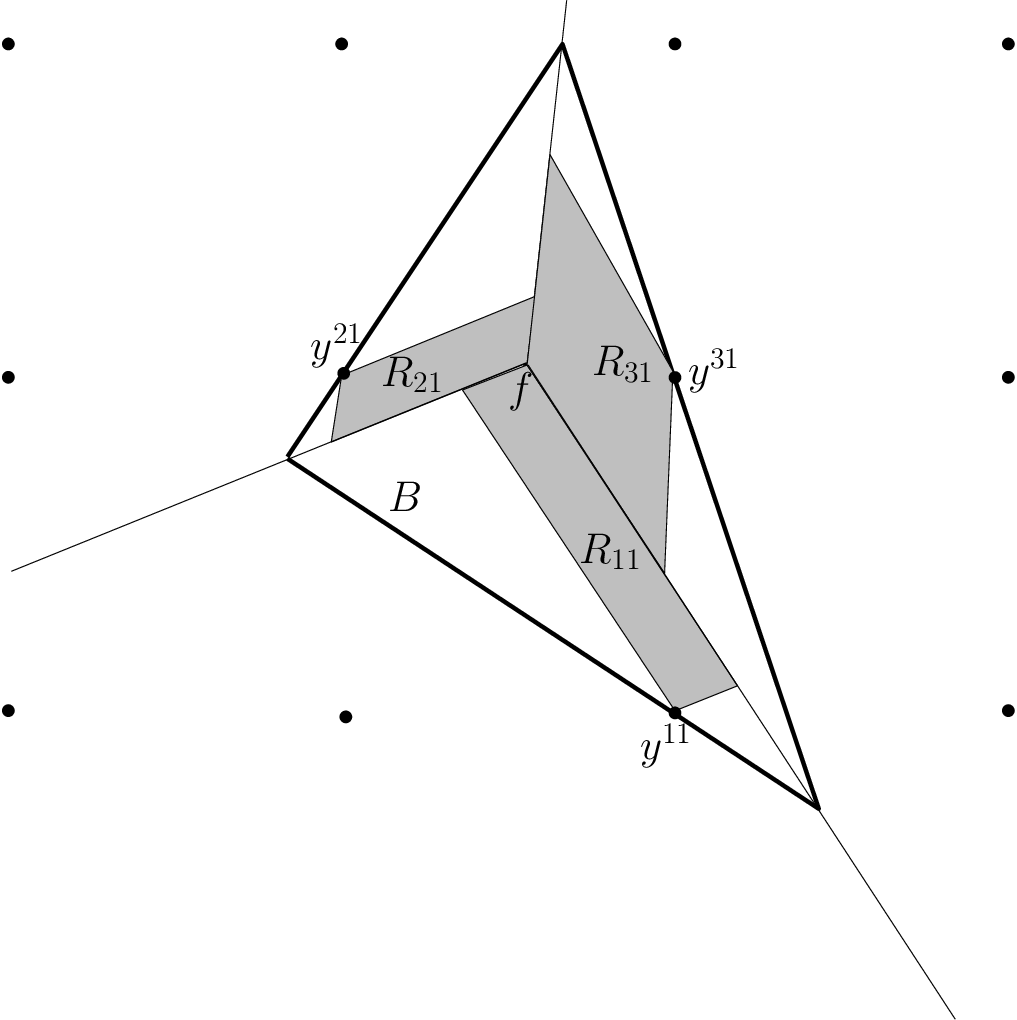}
}
\hspace{0.5in}\subfigure[A maximal lattice-free triangle with integer
vertices; this is an example where the region $R_{ik}$ is full-dimensional for $y^{1,2}, y^{2,2}, y^{3,2}$, but is
not full-dimensional for the integral vertices.] {\label{fig:type_1}
\includegraphics[height=2.0in]{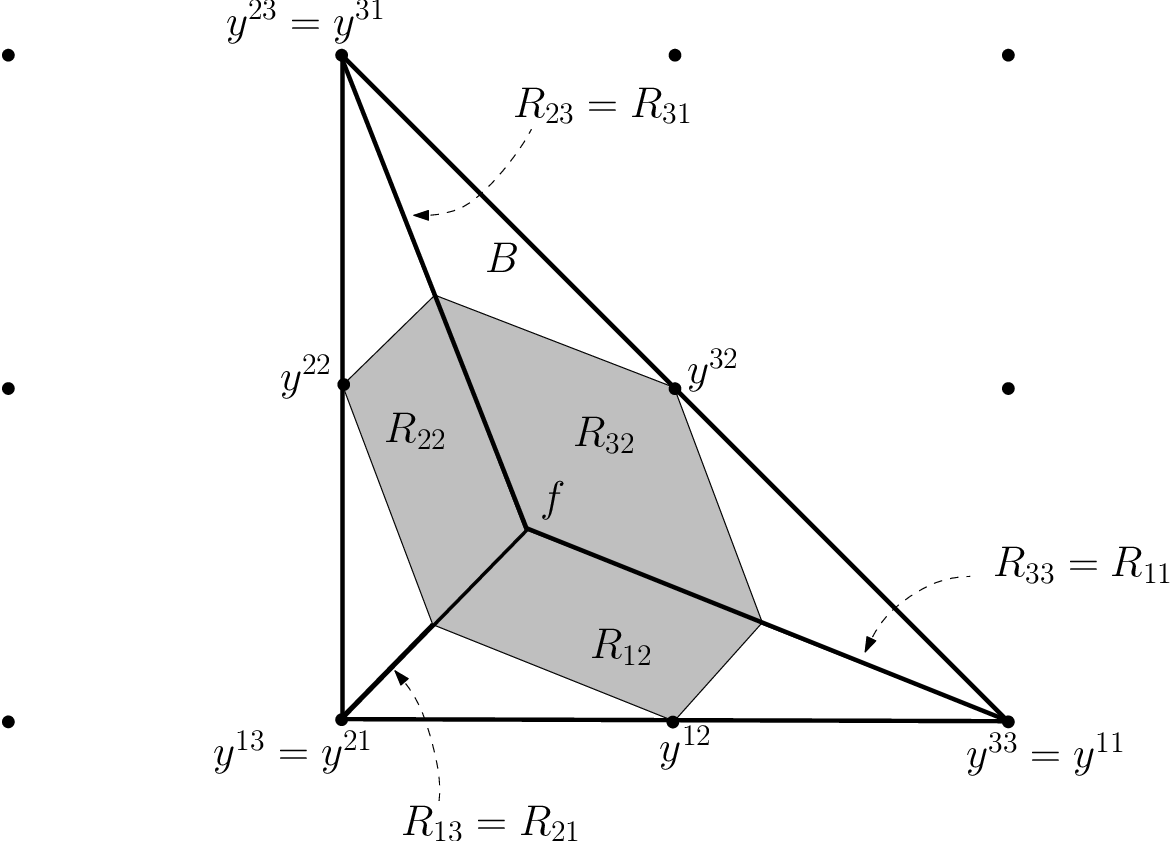}
}
\caption{Regions $R_{ik}$ for some
maximal lattice-free convex sets in the plane. The thick
dark line indicates the boundary of
$B$. For a particular $y^{ik}$, the dark gray regions denote~$R_{ik}$.}\label{fig:regions}
\end{figure}

Since $B$ is a simplicial polytope, each $y^{ik}$, $i \in I$, $k\in K_i$, can be uniquely expressed as
$$y^{ik} = \sum_{j=1}^n \lambda^{ik}_j v^{ij}$$
\noindent
where $\lambda^{ik}_j \geq 0$ and $\sum_{j=1}^n \lambda^{ik}_j =1$.
Note that the  $\lambda^{ik}_j$ are independent of the position of~$f$.  The
parallelotope $R_{ik}$ can now be described in terms of the
$\lambda^{ik}_j$s: \begin{equation}\label{eq:parallelotope}R_{ik} =
  \Bigl\{\, x\in \R^n \mathrel{\Big|} x = f + \sum_{j=1}^n  \mu_j(v^{ij}-f),
  \quad 0 \leq \mu_j \leq \lambda^{ik}_j, \quad j = 1, \ldots, n \,\Bigr\}.\end{equation}

Moreover, expressing the simplicial cone $C_i$ in the form $C_i= \{\,r \in
\R^n \st c^{ij}\cdot r \leq 0, \;\allowbreak j= 1, \ldots, n\,\}$, we can have an inequality description of $R_{ik}$:
\begin{equation}\label{eq:facet-desc}
R_{ik} = \{\,x \in \R^n \st c^{ij}\cdot f \geq c^{ij}\cdot x \geq c^{ij}\cdot y^{ik},\; j = 1, \ldots, n\,\}.
\end{equation}

We define the regions $R_i = \bigcup_{k\in K_i} R_{ik}$ and so $R(f) = \bigcup_{i\in I} R_i$.

\bigskip
We now study $\vol_{\T^n}(R(f)/\Z^n)$ as a function of~$f$.
First of all, we make use of the fact that the volume is additive on sets that
only intersect on the boundary.
\begin{lemma}\label{claim:i1i2}
$\vol_{\T^n}(R(f)/\Z^n) = \sum_{i\in I} \vol_{\T^n}(R_i/\Z^n)$.
\end{lemma}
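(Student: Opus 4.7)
Since $R(f) = \bigcup_{i\in I} R_i$ by definition, we have $R(f)/\Z^n = \bigcup_{i\in I} R_i/\Z^n$, and countable subadditivity immediately gives $\vol_{\T^n}(R(f)/\Z^n) \le \sum_{i\in I}\vol_{\T^n}(R_i/\Z^n)$. The task is therefore to promote this to equality, which amounts to showing that $(R_i/\Z^n) \cap (R_j/\Z^n)$ has measure zero in $\T^n$ for every pair $i \neq j$. Writing $\pi\colon\R^n\to\T^n$ for the canonical projection, I would use the identity
\[
(R_i/\Z^n)\cap(R_j/\Z^n)\;=\;\pi\Bigl(\bigcup_{z\in\Z^n} R_i\cap (R_j+z)\Bigr),
\]
so, since $\Z^n$ is countable, it suffices to show that $R_i\cap(R_j+z)$ is Lebesgue null in $\R^n$ for every $z\in\Z^n$.

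For the $z=0$ case, the containment $R_i\subseteq f+C_i$ together with the hypothesis that $B$ is simplicial and $f\in\intr(B)$ implies that the cones $\{f+C_i\}_{i\in I}$ form a complete simplicial fan based at $f$, whose interiors are pairwise disjoint; hence $\vol_{\R^n}(R_i\cap R_j)=0$. For $z\neq 0$, the key observation is that, since $y^{ik}\in\Z^n$, one has $R_{ik}/\Z^n = (R_{ik}-y^{ik})/\Z^n$, and the definition $R_{ik}=(f+C_i)\cap(y^{ik}-C_i)$ yields the inclusion $R_{ik}-y^{ik}\subseteq -C_i$. Consequently, $R_i/\Z^n$ admits a second description as the torus projection of a finite union of bounded parallelotopes all contained in the single cone $-C_i$ based at the origin, and analogously $R_j/\Z^n$ lies inside $-C_j$. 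Because the cones $\{-C_i\}_{i\in I}$ also form a complete simplicial fan (at the origin) with pairwise disjoint interiors, combining this description with the previous fan around $f$ should pin down the possible cross-intersections.

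The main obstacle I anticipate is precisely this last step: cone-disjointness in $\R^n$ does not transfer directly to $\T^n$, since each full-dimensional cone $-C_i$ projects onto all of $\T^n$. To rule out a positive-measure wrap-around overlap when the parallelotopes $R_{ik}-y^{ik}$ are larger than a fundamental domain of $\Z^n$, I expect one must exploit not just cone containment but the precise parallelotope geometry, namely that $R_{ik}-y^{ik}$ has vertices $0$ and $f-y^{ik}$ with edge vectors $\lambda^{ik}_j(f-v^{ij})$ determined by the simplicial fan. A careful case analysis based on which cone $-C_j$ contains a particular $\Z^n$-representative of a given torus point, combined with the fact that different facets of $B$ give different edge directions, should force each cross-intersection $R_{ik}\cap(R_{jl}+z)$ with $i\neq j$ to lie on a lower-dimensional set and thus be measure zero.
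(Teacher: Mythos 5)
There is a genuine gap. Your reduction is the right one (and matches the paper's): it suffices to show that $\intr(R_{ik}) \cap \intr(R_{jl} + w) = \emptyset$ for $i \neq j$ and every $w \in \Z^n$, and your $w=0$ case via the complete simplicial fan $\{f + C_i\}_{i\in I}$ is correct. But the heart of the lemma is the case $w \neq 0$, and there you only describe a strategy (``a careful case analysis \dots should force each cross-intersection \dots to be measure zero'') without carrying it out. Worse, the strategy as described cannot succeed: it relies only on cone containments ($R_{ik} - y^{ik} \subseteq -C_i$) and on the parallelotope edge directions, and never invokes the lattice-freeness of $B$. Lattice-freeness is essential here. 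If one drops it, the statement is false: take any simplex, place $f$ in its interior, and pick points $y^{ik}$ deep inside two different facets of a sufficiently dilated copy; the resulting parallelotopes $R_{ik}$ and $R_{jl}$ are large, and a suitable lattice translate of one will overlap the other in a full-dimensional set. Two parallelotopes with distinct edge directions can certainly intersect in a set of positive measure, so no amount of fan geometry alone will rule this out.

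The paper's argument supplies exactly the missing ingredient. Suppose $x \in \intr(R_{ik})$ and $x + w \in \intr(R_{jl})$. Writing $x = f + \sum_j \mu^1_j (v^{ij}-f)$ with $0 < \mu^1_j < \lambda^{ik}_j$ and $x + w = f + \sum_j \mu^2_j (v^{jj'}-f)$ with $0 < \mu^2_j < \lambda^{jl}_j$, and adding $y^{ik} = f + \sum_j \lambda^{ik}_j(v^{ij}-f)$, one finds that the \emph{lattice point} $w + y^{ik}$ is a convex combination of $f$ and the vertices $v^{ij}, v^{jj'}$ in which vertices from two distinct facets receive strictly positive weight; since those vertices do not all lie on a common facet, $w + y^{ik} \in \intr(B)$, contradicting the lattice-freeness of $B$. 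To repair your proof you would need to incorporate this (or an equivalent) use of lattice-freeness; the geometric bookkeeping you set up does not by itself close the argument.
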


\begin{proof} To prove the lemma, it suffices to show the following claim.

\bigskip \noindent
{\bf Claim.}
{\em For every $w\in \Z^n$, $\intr(R_{i_1k} + w) \cap \intr(R_{i_2l}) = \emptyset$ whenever $i_1 \neq i_2$ (where $k\in K_{i_1}$ and $l \in K_{i_2}$).}

\bigskip
Suppose to the contrary that there exist two distinct indices $i_1, i_2 \in I$, and $k\in K_{i_1}$, $l\in K_{i_2}$, $x \in \intr(R_{i_1k})$, $w \in \Z^n$ such that  $x + w \in \intr(R_{i_2l})$. For ease of notation and without loss of generality, we assume $i_1 = 1$, $i_2 = 2$.

Using the description of $R_{1k}$ and $R_{2l}$ as given in~\eqref{eq:parallelotope}, there exist multipliers $\mu^1_j$, $j = 1, \ldots, n$ and $\mu^2_j$, $j = 1, \ldots, n$ such that $x = \sum_{j=1}^n \mu^1_j(v^{1j} - f) + f$ with $0 < \mu^1_j < \lambda^{1k}_j$,  $j = 1, \ldots, n$, and $x + w = \sum_{j=1}^n \mu^2_j(v^{2j} - f) + f$ with $0 < \mu^2_j < \lambda^{2l}_j$,  $j= 1, \ldots, n$. Therefore,
\begin{subequations}
  \begin{gather}\label{eq:1}
    \sum_{j=1}^n \mu^1_j(v^{1j} - f) + w = \sum_{j=1}^n \mu^{2}_j(v^{2j} - f)\\
    0 < \mu^{1}_j < \lambda^{1k}_j,\quad 0 < \mu^{2}_j <
    \lambda^{2l}_j\quad\text{for } j = 1,
    \ldots, n
  \end{gather}
\end{subequations}
We observe that by definition of the multipliers $\lambda^{ik}_j$, we have $\sum_{j=1}^n \lambda^{1k}_j = 1$ and $\sum_{j=1}^n \lambda^{2l}_j = 1$. Also, we will assume without loss of generality that $\sum_{j=1}^n \mu^{1}_j \geq \sum_{j=1}^n \mu^{2}_j$, because otherwise we can denote $-w$ by $w$ and the entire proof can be reproduced. We also observe that $\sum_{j=1}^n \lambda^{1k}_j(v^{1j} - f) + f  = y^{1k}\in \Z^n$ by definition. We now show that the lattice point $w + y^{1k} \in \intr(B)$, reaching a contradiction with the fact that $B$ is lattice-free. From (\ref{eq:1}) we deduce that
\begin{align*}
w + y^{1k} & = \sum_{j=1}^n \mu^{2}_j(v^{2j} - f) - \sum_{j=1}^n \mu^{1}_j(v^{1j} - f) + \sum_{j=1}^n \lambda^{1k}_j(v^{1j} - f) + f \\
& = \sum_{j=1}^n \mu^{2}_j(v^{2j} - f) + \sum_{j=1}^n(\lambda^{1k}_j - \mu^{1}_j)(v^{1j} - f) + f
\end{align*}

Using the facts that $\mu^{2}_j > 0$ and $\mu^{1}_j < \lambda^{1k}_j$ for all $j = 1, \ldots, n$, $\sum_{j=1}^n \mu^{1}_j \geq \sum_{j=1}^n \mu^{2}_j$ and $\sum_{j=1}^n \lambda^{1k}_j = 1$, we can rearrange terms and rename the multipliers to obtain
\begin{align*}
 w + y^{ik} & = \sum_{j=0}^n (\delta^1_jv^{1j} + \delta^2_jv^{2j}) + \delta f
\end{align*}
where $\delta^1_j > 0$ and $\delta^2_j > 0$ for all $j$ and $\delta = (1 -  \sum_{j=1}^n \mu^{2}_j - \sum_{j=1}^n \lambda^{1k}_j + \sum_{j=1}^n\mu^{1}_j) \geq 0$ and $\sum_{j=1}^n(\delta^1_j+\delta^2_j) + \delta = 1$. Since the vertices $v^{1j}, v^{2j}, j = 1, \ldots, n$ do not all lie on the same facet, this implies that $w + y^{1k} \in \intr(B)$.
\end{proof}

\bigskip
We now study the volume of each individual region, $\vol_{\T^n} (R_i/\Z^n)$, as a function
of~$f$.  We can measure the volume of $R_i/\Z^n$ in the quotient space by mapping each element $p \in R_i$ to
a canonical representative in the coset $p+\Z^n$, and taking the ordinary
Lebesgue measure of the image.  A suitable canonical map $\phi\colon R_i \to
\R^n$ is defined as follows.
Consider any term order, i.e., a total linear ordering~$\succ$ on~$\R^n$ such
that $w_1 \succ w_2$ implies $w_1 + w \succ w_2 + w$ for all $w_1, w_2, w \in
\R^n$ (e.g., one can use the lexicographical ordering).
Then define $\phi(p)$ as the minimum element in $(p + \Z^n) \cap R_i$ (the
minimum exists since $R_i$ is compact and $p + \Z^n$ is discrete).
Let $\tilde R_i = \phi(R_i) \subseteq \R^n$; then $\vol_{\T^n}(R_i/\Z^n) = \vol(\tilde R_i)$.

We claim that
\begin{equation}\label{eq:modulo}\tilde R_i = R_i \setminus \bigcup_{w \in
    \Z^n : w \succ 0} (R_i + w).
\end{equation}

We first show that $\tilde R_i \subseteq R_i \setminus \bigcup_{w \in \Z^n : w \succ 0} (R_i + w)$. Consider any $p\in R_i$ and let $m =\phi(p)$, i.e., $m$ is the minimum element in $(p + \Z^n) \cap R_i$.  If the
minimum element $m$ is not in $R_i \setminus \bigcup_{w \in \Z^n : w \succ 0} (R_i + w)$ then $m \in R_i + w$ for some $w
\in \Z^n$ with $w \succ 0$ and so $m - w \in R_i$ and $m \succ m - w$ (since
$\succ$ respects addition and $w \succ 0$), contradicting the fact that $m$
was the minimum.

Next, we show that $R_i \setminus \bigcup_{w \in \Z^n : w \succ 0} (R_i + w) \subseteq  \tilde R_i$. Consider any $p \not\in \tilde R_i$. Then either $p \not\in R_i$ or $\phi(p) \neq p$. If $p \not\in R_i$ then certainly $p \not\in R_i \setminus \bigcup_{w \in \Z^n : w \succ 0} (R_i + w)$. If $\phi(p) \neq p$, then there exists $w \succ 0$ such that $p = \phi(p) + w$. Since $\phi(p) \in R_i$ by definition, $p = \phi(p) + w \in R_i + w$ and so $p\not\in R_i \setminus \bigcup_{w \in \Z^n : w \succ 0} (R_i + w)$. This shows that \eqref{eq:modulo} holds.

Since $R_i$ is a subset of the compact set~$B$, there exists a finite
set~$\bar W \subset \Z^n \cap \{ w\succ 0\}$ that is independent of the position of $f$, such that $R_i \cap (R_i + w) =
\emptyset$ if $w\notin \bar W$.  Thus we get the \emph{finite} formula
\begin{equation}\label{eq:modulo2}
  \tilde R_i = R_i \setminus \bigcup_{w \in \bar W} (R_i + w)
  = \Bigl(\bigcup_{k \in K_i} R_{ik}\Bigr)\setminus \Bigl(\bigcup_{k \in K_i}
  \bigcup_{w \in \bar W} (R_{ik} + w)\Bigr),
\end{equation}
which will allow us to use the standard
inclusion--exclusion formula.
We note that the formula is independent of~$f$, but the sets $R_{ik}$
appearing in it depend on~$f$.
For a set $X\subseteq\R^n$, we denote by $[X]$
its indicator function, defined by $[X](x) = 1$ if $x\in X$ and $[X](x) = 0$
otherwise.  Then the inclusion--exclusion formula can be expressed as the
linear identity of indicator functions
\begin{equation}
  \label{eq:incl-excl}
  [\tilde R_i] = \sum_{\gamma \in \bar\Gamma} \mu_\gamma [I_\gamma],
\end{equation}
where $\bar\Gamma$ is some finite index set, each $I_\gamma$ is a polytope
that is the finite
intersection of the form $(R_{ik_1} + t_1) \cap (R_{ik_2} + t_2) \cap \dots
\cap (R_{ik_m} + t_m)$ where $t_1, \ldots, t_m \in \Z^n$, and the
coefficients $\mu_\gamma\in \Z$.  Again the index set~$\bar\Gamma$ and the
coefficients $\mu_\gamma$ are independent of~$f$ because both the index sets
$K_i$ and $\bar W$ are independent of $f$.
The valuation property of the volume (see Chapter I.8 in~\cite{barv})  then implies
\begin{equation}\label{eq:vol-sum}
  \vol_{\T^n}(R_i/\Z^n) = \vol(\tilde R_i) = \sum_{\gamma \in \bar\Gamma}
  \mu_\gamma \vol(I_\gamma).
\end{equation}

We next show that many of the terms $\vol(I_\gamma)$ in~\eqref{eq:vol-sum}
actually vanish because the polytopes $I_\gamma$ are lower-dimensional.  Then we
show that all the remaining terms are affine functions in the coordinates
of~$f$.

To this end, consider the intersection lattice $\Lambda_i = \{\, x\in\Z^n \mid a^i \cdot x =
0 \,\}$, so that $y^{ik} - y^{ik'} \in \Lambda_i$ for any two integer points
$y^{ik}$, $y^{ik'}$ in the facet of $B$ defined by $a^i \cdot (x-f) \leq 1$.

\begin{lemma} \label{lemma:no-self-intersection}
The region $\intr(R_i) \cap \intr(R_i + t) = \emptyset$ for all $t\in \Z^n
\setminus \Lambda_i$.
\end{lemma}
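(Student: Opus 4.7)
The plan is to argue the contrapositive: assuming that there exists $t \in \Z^n$ with $\intr(R_i) \cap \intr(R_i + t) \neq \emptyset$, I will show $a^i \cdot t = 0$, i.e., $t \in \Lambda_i$. The strategy mirrors the argument in the Claim inside Lemma~\ref{claim:i1i2}, namely to exhibit a lattice point in $\intr(B)$ and thereby contradict the lattice-freeness of~$B$. The key difference from that earlier proof is that here both parallelotopes lie on the same facet $i$, so the convex combination I construct will involve only the vertices $v^{i1},\dots,v^{in}$ of that facet together with $f$ itself.

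First I would reduce to the case where the intersection contains a point $x$ lying in the interior of an individual parallelotope. Since $\intr(R_i) \cap \intr(R_i + t)$ is open and nonempty, it has positive Lebesgue measure; since $\bigcup_{k \in K_i} \bd(R_{ik})$ is a finite union of lower-dimensional sets, it has measure zero. Thus I can pick $x$ in the intersection with $x \in \intr(R_{ik_1})$ and $x + t \in \intr(R_{ik_2})$ for some $k_1, k_2 \in K_i$. Using the parameterization \eqref{eq:parallelotope}, write $x = f + \sum_j \mu^1_j(v^{ij} - f)$ with $0 < \mu^1_j < \lambda^{ik_1}_j$, and $x + t = f + \sum_j \mu^2_j(v^{ij} - f)$ with $0 < \mu^2_j < \lambda^{ik_2}_j$. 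Since $a^i \cdot (v^{ij} - f) = 1$, summing yields $a^i \cdot t = \sum_j (\mu^2_j - \mu^1_j)$. Swapping the roles of $k_1,k_2$ and replacing $t$ by $-t$ if needed, I may assume $a^i \cdot t \leq 0$; if equality holds then $t \in \Lambda_i$ and we are done, so I assume $a^i \cdot t < 0$.

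The core computation then uses $y^{ik_1} - f = \sum_j \lambda^{ik_1}_j(v^{ij} - f)$ to write
\[
y^{ik_1} + t = f + \sum_{j=1}^n \delta_j (v^{ij} - f),\qquad \delta_j := \lambda^{ik_1}_j + \mu^2_j - \mu^1_j.
\]
Each $\delta_j$ is strictly positive (since $\lambda^{ik_1}_j > \mu^1_j$ and $\mu^2_j > 0$) and $\sum_j \delta_j = 1 + a^i \cdot t < 1$, so
\[
y^{ik_1} + t = \Bigl(1 - \sum_{j=1}^n \delta_j\Bigr) f + \sum_{j=1}^n \delta_j v^{ij}
\]
is a strict convex combination with strictly positive weight on $f \in \intr(B)$ and on every $v^{ij} \in B$. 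Hence $y^{ik_1} + t \in \intr(B)$, and since $y^{ik_1},t \in \Z^n$ it is a lattice point, contradicting that $B$ is lattice-free. The main obstacle is really the opening reduction: an interior point of $R_i$ may lie on the shared boundary of several $R_{ik}$'s without belonging to the interior of any single one, but the measure-zero argument lets me pick a generic $x$ inside an individual parallelotope, after which the calculation proceeds as a streamlined analogue of the Claim in Lemma~\ref{claim:i1i2}, with the crucial strict positivity of the weight on $f$ being precisely the consequence of $a^i \cdot t \neq 0$.
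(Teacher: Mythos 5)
Your proof is correct, but it takes a genuinely different route from the paper's. The paper works with the simplex $D=\conv\{f,v^{i1},\ldots,v^{in}\}\supseteq R_i$: after normalizing to $a^i\cdot t<0$, it uses the lattice-freeness of $D$ to find, for each $k$, a facet $c\cdot x\le d$ of $D$ through $f$ with $d\le c\cdot(y^{ik}+t)$, and then reads off from \eqref{eq:facet-desc} that $R_{ik}+t$ lies entirely in the halfspace $c\cdot x\ge c\cdot(y^{ik}+t)$, hence misses $\intr(D)$; this separating-facet argument proves the slightly stronger statement $\intr(D)\cap(R_i+t)=\emptyset$ and requires no reduction to interiors of individual parallelotopes. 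You instead rerun the convex-combination argument from the Claim inside Lemma~\ref{claim:i1i2} with both parallelotopes on the same facet: the identity $a^i\cdot(v^{ij}-f)=1$ gives $a^i\cdot t=\sum_j(\mu^2_j-\mu^1_j)$, the weights $\delta_j=\lambda^{ik_1}_j-\mu^1_j+\mu^2_j$ are strictly positive, and $a^i\cdot t<0$ makes the weight $-a^i\cdot t$ on the interior point $f$ strictly positive, so $y^{ik_1}+t\in\intr(B)$, contradicting lattice-freeness of $B$. The price of your approach is the opening measure-zero reduction, which is genuinely needed (a point of $\intr(R_i)$ can lie on the common boundary of several $R_{ik}$, and lower-dimensional $R_{ik}$ must be avoided) and which you handle correctly; the payoff is a unified treatment of this lemma and the earlier Claim, whereas the paper's argument buys a cleaner per-parallelotope disjointness statement.
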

\begin{proof}
Let $t$ be any vector in $\Z^n \setminus \Lambda_i$.
We will show that $\intr(R_i) \cap (R_i + t) = \emptyset$.
Let $D$ be the simplex formed by $f$ and the vertices $v^{ij}$, $j = 1, \ldots, n$; then
$R_i\subseteq D$. We recall that the facet of $B$ determined by $a^i$ is given by $a^i\cdot x \leq b$ where $b = 1 + a^i\cdot f$ and so this is also a facet of $D$.
Note that the sets $R_i+t$ and $R_i$ intersect if and only if their translates $R_i$ and $R_i - t$ intersect.
Therefore we may assume that $t \in \Z^n \setminus \Lambda_i$ has been chosen so that $a^i\cdot t < 0$. Since $R_i \subseteq D$, it suffices to show that $\intr(D) \cap (R_i + t) = \emptyset$.

Consider any parallelotope $R_{ik}$ in $R_i$. We show that $R_{ik} + t$
does not intersect the interior of $D$. Since $D$ is a lattice-free simplex
and $a^i\cdot (t+y^{ik}) < a^i\cdot y^{ik} = b$, there is a facet of $D$
that separates $t + y^{ik}$ from $D$ that is different from the facet given
by $ a^i\cdot x \leq b $. Say this facet is given by $c\cdot x \leq d$. Therefore, $d \leq c\cdot (t+y^{ik})$. Using \eqref{eq:facet-desc}, there is a facet of $R_{ik}$ given by $c\cdot x \geq c\cdot y^{ik}$ and so $R_{ik} + t$ has a facet given by $c\cdot x \geq c\cdot (t+y^{ik})$. Since $D$ has a facet $c\cdot x \leq d$ and $d \leq c\cdot (t+y^{ik})$, $\intr(D) \cap (R_i + t) = \emptyset$.
\end{proof}

Now let us define $\Gamma$ as the subset of $\gamma\in\bar\Gamma$ for which
$I_\gamma$ is a finite intersection of the form $(R_{ik_1} + t_1) \cap
(R_{ik_2} + t_2) \cap \dots \cap (R_{ik_m} + t_m)$ where the translation
vectors $t_1, \ldots, t_m$ all lie in the intersection lattice~$\Lambda_i$.
By Lemma~\ref{lemma:no-self-intersection}, if $\gamma\in
\bar\Gamma\setminus\Gamma$, then $\vol(I_\gamma) = 0$.
Thus we obtain the formula
\begin{equation}\label{eq:vol-sum-reduced}
  \vol_{\T^n}(R_i/\Z^n) = \vol(\tilde R_i) = \sum_{\gamma \in \Gamma}
  \mu_\gamma \vol(I_\gamma).
\end{equation}

We now consider the remaining terms~$\vol(I_\gamma)$ for $\gamma\in \Gamma$.
Since $\Lambda_i$ is the intersection lattice of the linear space spanned by $v^{ij} - v^{i1}$, $j = 2, \ldots, n$, the coordinates of any $t \in \Lambda_i$ in the basis $v^{ij} - f$, $j= 1, \ldots, n$, are independent of $f$. Therefore, for arbitrary $t_1, t_2 \in \Lambda_i$ and $R_{ik_1}$ and $R_{ik_2}$ we have that $(R_{ik_1} + t_1) \cap  (R_{ik_2} + t_2)$ is of the form
\begin{equation}
 \Bigl\{\, x\in \R^n \mathrel{\Big|} x = f + \sum_{j=1}^n
 \mu_j(v^{ij}-f), \quad \mu^{ih}_j \leq \mu_j \leq \nu^{ih}_j, \quad j = 1,
 \ldots, n \,\Bigr\}, \label{eq:elem-paras}
\end{equation}
where $\mu^{ih}_j,
\nu^{ih}_j$ are independent of~$f$. By induction on the number of terms appearing in the intersection expression for $I_\gamma$, we deduce that $I_\gamma$ is also a parallelotope of the form  \eqref{eq:elem-paras}. We now show that the volume of $I_\gamma$ is an affine function of the
coordinates of~$f$. To this end, for $i\in I$, let $M_i$ be the $(n+1) \times
(n+1)$ matrix with columns $v^{ij}$, $j=1, \ldots, n$, and
$f$ expressed in homogeneous coordinates (therefore all the entries in the
last row of $M_i$ are 1). Since $I_\gamma$ is a parallelotope of the form \eqref{eq:elem-paras}, we have that $\vol(I_\gamma) = \lvert \det(M_i)\rvert \prod_{j=1}^n (\nu^{ik}_j-\mu^{ik}_j)$.  Since $\mu^{ik}_j,
\nu^{ik}_j$ are independent of~$f$, this implies the following.

\begin{lemma}\label{lem:linear-vol}
$\vol(I_\gamma)$ is an affine function of the coordinates of $f \in \intr(B)$ for all $\gamma \in \Gamma$.
\end{lemma}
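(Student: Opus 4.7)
The plan is to leverage the formula $\vol(I_\gamma) = \lvert\det(M_i)\rvert \prod_{j=1}^n (\nu^{ik}_j - \mu^{ik}_j)$ already derived just above the lemma statement. Since the product $\prod_{j=1}^n (\nu^{ik}_j - \mu^{ik}_j)$ is a constant independent of $f$, the entire task reduces to showing that $\lvert \det(M_i) \rvert$ is an affine function of the coordinates of $f$ on $\intr(B)$.

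First I will show that $\det(M_i)$ itself is affine in $f$. The matrix $M_i$ has its first $n$ columns equal to the fixed vertices $v^{i1}, \ldots, v^{in}$ in homogeneous coordinates, so only the last column $(f_1, \ldots, f_n, 1)^\top$ depends on $f$, and it does so affinely. By multilinearity of the determinant in its columns, $\det(M_i)$ is therefore an affine function of $f$; an explicit cofactor expansion along the last column gives $\det(M_i) = \sum_{k=1}^n C_k f_k + C_{n+1}$, where the constants $C_k$ depend only on the fixed vertices $v^{ij}$.

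Next I will argue that $\det(M_i)$ does not change sign on $\intr(B)$, so that taking absolute value preserves affinity. The determinant vanishes precisely when $f$ lies in the affine hyperplane $H_i$ spanned by $v^{i1}, \ldots, v^{in}$, which is the affine hull of a facet of $B$. Consequently $\intr(B)$ lies entirely in one of the two open halfspaces cut out by $H_i$ and is disjoint from $H_i$. Since $\intr(B)$ is connected and $\det(M_i)$ is continuous and nonvanishing there, it has constant sign throughout, and so $\lvert \det(M_i) \rvert$ coincides on $\intr(B)$ with either $\det(M_i)$ or $-\det(M_i)$; in either case it is affine in $f$. The only point requiring care is this sign consistency — without it, taking the absolute value could introduce a kink along $H_i$ and affinity would fail — but the geometry of $B$ makes this immediate, so there is no real obstacle.
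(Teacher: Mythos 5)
Your proposal is correct and follows essentially the same route as the paper: the paper also derives $\vol(I_\gamma) = \lvert\det(M_i)\rvert \prod_{j=1}^n (\nu^{ik}_j-\mu^{ik}_j)$ and concludes affinity from the $f$-independence of the $\mu^{ik}_j, \nu^{ik}_j$. The only difference is that you spell out the two details the paper leaves implicit --- multilinearity in the single $f$-dependent column, and the constancy of the sign of $\det(M_i)$ on $\intr(B)$ because $\intr(B)$ avoids the affine hull of the facet spanned by $v^{i1},\ldots,v^{in}$ --- which is a worthwhile clarification but not a different argument.
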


Combining Lemma~\ref{claim:i1i2} with equation~\eqref{eq:vol-sum} and Lemma~\ref{lem:linear-vol}, we have Theorem~\ref{lem:mod_vol}.

\bigskip
So far, we have assumed that $f$ is in the interior of $B$. Since $\vol_{\T^n}(R(f)/\Z^n)$ is an affine function of the coordinates of $f$ by Theorem~\ref{lem:mod_vol}, it is natural to extend its definition to points $f$ on the boundary of $B$ by continuity. In fact, when $f$ is on the boundary of~$B$, the region~$R$ can still be defined using \eqref{eq:R}
 and \eqref{eq:parallelotope}. The regions $R_{ik}$ that have positive volume correspond to integer points $y^{ik}$ in the interior of the facets of $B$ that do not contain $f$. Theorem~\ref{lem:mod_vol} implies the following.

\begin{cor}\label{cor:face}
Let $B$ be a maximal lattice-free simplicial polytope in $\R^n$.
Then the set $\{\, f\in B\st \vol_{\T^n}(R(f)/\Z^n) = 1 \,\}$ is a face of $B$.
\end{cor}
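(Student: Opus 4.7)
The plan is to combine the affineness of $g(f) := \vol_{\T^n}(R(f)/\Z^n)$ established in Theorem~\ref{lem:mod_vol} with the trivial upper bound $g \leq 1$, and then to invoke the elementary fact that an affine function on a polytope attains its maximum on a face.

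First I would extend $g$ from $\intr(B)$ to the whole polytope $B$. Since an affine function on a (relatively) open convex set admits a unique affine extension to its closure, Theorem~\ref{lem:mod_vol} delivers an affine function $g : B \to \R$. As the paragraph preceding the corollary points out, this continuous extension agrees with the definition of $R(f)$ coming from \eqref{eq:R} and \eqref{eq:parallelotope} at boundary points: some of the parallelotopes $R_{ik}$ degenerate to lower-dimensional sets (precisely those $R_{ik}$ corresponding to integer points on facets containing $f$), and these contribute zero volume, while the remaining ones still depend affinely on $f$.

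Next, because $R(f)/\Z^n$ is a subset of the torus $\T^n$ whose total volume is~$1$, we have $g(f) \leq 1$ for every $f \in B$. Therefore
\[
\{\, f \in B \st g(f) = 1 \,\} = \{\, f \in B \st g(f) = \max_{f' \in B} g(f') \,\}
\]
whenever the set on the left is nonempty; otherwise it is empty. An affine function on a polytope attains its maximum on a face, and the empty set is a face of every polytope by convention, so in either case $\{\, f \in B \st g(f) = 1 \,\}$ is a face of $B$.

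There is essentially no obstacle: once Theorem~\ref{lem:mod_vol} and the continuous extension of $g$ to $\bd(B)$ are in hand, the corollary is a one-line consequence of the maximum principle for affine functions on polytopes together with the trivial volume bound $g \leq 1$.
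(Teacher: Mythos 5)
Your proof is correct and follows essentially the same route as the paper: extend $\vol_{\T^n}(R(f)/\Z^n)$ affinely to all of $B$ via Theorem~\ref{lem:mod_vol}, bound it above by~$1$, and observe that the level set at the (potential) maximum value of an affine function over a polytope is a face. Your explicit handling of the case where the set is empty is a small added precision over the paper's phrasing, but not a different argument.
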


\begin{proof}
Since
$\vol_{\T^n}(R(f)/\Z^n)$ is always at most 1, the value 1 is a maximum value for the function
$\vol_{\T^n}(R(f)/\Z^n)$. By Theorem~\ref{lem:mod_vol}, optimizing this function over $B$ is a linear
 program and hence the optimal set is a face of $B$.
\end{proof}

We deduce that it suffices to check covering properties at the vertices when we try to decide if the minimal inequality derived from a simplicial polytope has a unique minimal lifting.

Corollary~\ref{cor:face} implies Theorem~\ref{body-with}. Indeed, if the set $\{\, f\in B\st \vol_{\T^n}(R(f)/\Z^n) = 1 \,\}$ is~$B$,
then $R(f)+\Z^n=\R^n$ for all $f \in B$, and therefore $B$ is a body with a
unique lifting for all $f \in \intr(B)$. Otherwise, $R(f)+\Z^n\not=\R^n$ for
all $f \in \intr(B)$, and therefore $B$ is a body with multiple liftings for all $f \in \intr(B)$.

\section{Simplices with exactly one lattice point in the relative interior of each facet}\label{sec:one-latt}

We prove Theorem~\ref{thm:main} in this section.
 Let $\Delta$ be a simplex in $\R^n$ ($n \geq 2$) such that it is a maximal lattice-free convex body and each facet of $\Delta$ has exactly one integer point in its relative interior. Note that there is no restriction on the number of integer points on faces of dimension $n-2$ or lower. We consider the region $R(f)$ as defined in \eqref{eq:R} for the minimal valid function derived from this simplex and we show that the volume of this region is at most~1.

By making an affine unimodular transformation, we can assume that one of the lattice points in the relative interior of a facet of $\Delta$ is $0$. Let $y^1, \ldots, y^n$ be the other integer points in the relative interior of the facets of $\Delta$. We extend the notation and denote $y^0 = 0$. \old{We label the remaining integer point on the boundary of $\Delta$ by $y^{n+1} , \ldots, y^k$.}With a slight extension of our earlier notation, let us denote by $a^i, i = 0,\ldots, n$, the normals to the facet of $\Delta$ passing through $y^i$ such that $$\Delta = \{\,x \in \R^n \st a^i\cdot x \leq a^i\cdot y^i \;\;\;\; \forall i = 0, \ldots, n\,\}.$$
Let $v^0, \ldots, v^n$ be the vertices of $\Delta$, labeled such that the facet not containing $v^i$ contains~$y^i$. As per our earlier notation, for $i = 0, \ldots, n$,  $R_i$ will denote the part of $R(f)$ formed by the integer points lying on the facet $a^i \cdot x \leq a^i \cdot y^i$. Also, observe that the only parallelotope in $R_i$ with non-zero volume is the one with $y^i$ as a vertex. Henceforth, we will refer to this parallelotope with nonzero volume as $R_i$ itself, since the other regions do not contribute anything.

To prove Theorem~\ref{thm:main}, it suffices to prove the following result.

\begin{theorem}\label{thm:vol}
Let $f= v^0$. Then $\vol(R_0) \leq 1$, where equality holds if and only if $\Delta$ is an affine unimodular transformation of $\conv\{0, ne^1, \ldots, ne^n\}$.
\end{theorem}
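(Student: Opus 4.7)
The strategy is to compute $\vol(R_0)$ as an explicit product, show via an injectivity argument on the torus that this quantity is at most~$1$, and then analyze the rigidity of the equality case. From the parallelotope description~\eqref{eq:parallelotope} with $f=v^0$, the opposite corners of $R_0$ are $v^0$ and $v^0+\sum_{j=1}^n\alpha^0_j(v^j-v^0)=y^0=0$, and the edges emanating from $v^0$ are $\alpha^0_j(v^j-v^0)$ for $j=1,\ldots,n$; thus
\[
\vol(R_0) \;=\; \Bigl(\prod_{j=1}^n \alpha^0_j\Bigr)\cdot|\det(v^1-v^0,\ldots,v^n-v^0)| \;=\; n!\,\vol(\Delta)\prod_{j=1}^n\alpha^0_j.
\]

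To bound this by~$1$, I will show that $\intr(R_0)\cap\intr(R_0+t)=\emptyset$ for every $t\in\Z^n\setminus\{0\}$, so that $\vol_{\T^n}(R_0/\Z^n)=\vol(R_0)\le 1$. Lemma~\ref{lemma:no-self-intersection} handles $t\notin\Lambda_0$. For $t\in\Lambda_0\setminus\{0\}$, suppose for contradiction that the interiors overlap; then $t\in\intr(R_0-R_0)$, so $t=\sum_{j=1}^n s_j\alpha^0_j(v^j-v^0)$ with $s_j\in(-1,1)$. Since $a^0\cdot(v^j-v^0)=-a^0\cdot v^0>0$ takes the same value for every $j$, the condition $a^0\cdot t=0$ forces $\sum_j s_j\alpha^0_j=0$. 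Combining with $y^0=\sum_j\alpha^0_j v^j$ gives
\[
y^0+t \;=\; \sum_{j=1}^n \alpha^0_j\,(1+s_j)\, v^j,
\]
an affine combination with coefficients strictly positive (as $s_j>-1$) summing to~$1$. Hence $y^0+t$ lies in the relative interior of facet~$0$ and is a lattice point distinct from $y^0$, contradicting the hypothesis of a unique interior lattice point on that facet.

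For the equality case, $\vol(R_0)=1$ combined with the above injection forces $R_0$ to be a fundamental domain of $\Z^n$; a standard property of tiling parallelotopes then implies that its edges $\alpha^0_j(v^j-v^0)$ form a $\Z$-basis of $\Z^n$. Applying the affine unimodular transformation sending $v^0\mapsto 0$ and $\alpha^0_j(v^j-v^0)\mapsto e^j$ for each $j$, the simplex becomes $\Delta=\conv\{0,\,e^1/\alpha^0_1,\ldots,e^n/\alpha^0_n\}$ with $y^0=(1,\ldots,1)$. The facet opposite $v^i$ (for $i\ge 1$) has relative interior $\{x_i=0,\,x_j>0\text{ for }j\ne i,\,\sum_{j\ne i}\alpha^0_j x_j<1\}$, and the point $\sum_{j\ne i}e^j$ is an interior lattice point there. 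Requiring it to be the \emph{unique} such lattice point forbids $\sum_{j\ne i}e^j+e^k$ from being interior for every $k\ne i$, yielding $\alpha^0_k\ge\alpha^0_i$; imposing this for every~$i$ forces all $\alpha^0_j$ equal, so $\alpha^0_j=1/n$ by $\sum_j\alpha^0_j=1$, and $\Delta=\conv\{0,ne^1,\ldots,ne^n\}$. The main obstacle will be the equality-case bookkeeping, particularly the appeal to the parallelotope-tiling fact and the extraction of $\alpha^0_j=1/n$ from the one-lattice-point-per-facet hypothesis applied to every facet.
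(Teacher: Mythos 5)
Your argument for the inequality $\vol(R_0)\le 1$ is correct and takes a genuinely different route from the paper's: instead of symmetrizing $R_0$ into the $0$-symmetric parallelotope $S$ with $\vol(S)=2^n\vol(R_0)$ and invoking Minkowski's convex body theorem, you show directly that $R_0$ packs under $\Z^n$-translations. Your treatment of $t\in\Lambda_0\setminus\{0\}$ --- writing $t=\sum_j s_j\alpha^0_j(v^j-v^0)$ with $s_j\in(-1,1)$, using the fact that $a^0\cdot(v^j-v^0)$ is the same for all $j$ to force $\sum_j s_j\alpha^0_j=0$, and exhibiting $y^0+t$ as a second lattice point in the relative interior of the facet through $y^0$ --- is clean and more elementary, and it brings in the one-lattice-point hypothesis at essentially the same place the paper does.

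The equality case, however, has a genuine gap. The ``standard property of tiling parallelotopes'' you invoke --- that a parallelotope which is a fundamental domain for $\Z^n$ has edge vectors forming a $\Z$-basis --- is false. Take $n=2$ and the parallelogram with edge vectors $(1,\tfrac12)$ and $(0,1)$: it tiles $\R^2$ by $\Z^2$-translates with disjoint interiors (every vertical slice is an interval of length $1$), yet its edges are not even lattice vectors, so it is not a unimodular image of the unit cube. This is precisely the difficulty behind Minkowski's conjecture: the most one can extract from extremality is Theorem~\ref{thm:minkowski3} (Minkowski--Haj\'os), which straightens only \emph{one} pair of opposite facets after a unimodular transformation. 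The paper then needs the induction of Lemma~\ref{lem:extremal_parallel}, which re-uses the one-lattice-point-per-facet hypothesis in every dimension to straighten the remaining directions; note that the symmetrization of the skew tile above has two lattice points in the relative interior of a facet, which is exactly the configuration that hypothesis excludes. Your subsequent computation (deriving $\alpha^0_k\ge\alpha^0_i$ for all $k\ne i$ and hence $\alpha^0_j=1/n$) is fine once the basis claim is granted, but the basis claim itself cannot be had for free: it requires an argument of Minkowski--Haj\'os strength combined with the facet hypothesis, which must enter \emph{before} you may assume the tile is a unimodular cube, not only afterwards.
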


Using this, we first prove Theorem~\ref{thm:main}.

\begin{proof}[Proof of Theorem~\ref{thm:main}]
When $\Delta$ is an affine unimodular transformation of
the simplex $\conv\{0, ne^1, \ldots, ne^n\}$, consider $f$ at one of the vertices, say $v^0$, which is the vertex corresponding to $0$. Then $R_0$ is simply an affine unimodular transformation of the cube, and therefore $\vol_{\T^n} (R_0/\Z^n) = 1$. The same holds for $f$ at the other vertices of $\Delta$, which implies by
Theorem~\ref{lem:mod_vol} that $\vol_{\T^n} (R(f)/\Z^n) = 1$ for all $f \in \Delta$, i.e., $\Delta$ is a body with a unique lifting for all $f \in \intr(\Delta)$.
 We note here that this fact was also proved in~\cite{ccz}. Theorem~\ref{lem:mod_vol} enables us to give a more immediate proof.

Now suppose that $\Delta$ is a body with a unique lifting for all $f \in \intr(\Delta)$. Then when $f = v^0$ we have $\vol_{\T^n} (R_0/\Z^n) = 1$, and therefore $\vol (R_0) \geq 1$. By the first part of Theorem~\ref{thm:vol} we have $\vol (R_0) = 1$. Then by the second part of Theorem~\ref{thm:vol}, we know that $\Delta$ is an affine unimodular transformation of $\conv\{0, ne^1, \ldots, ne^n\}$.
\end{proof}

We state two classical theorems from the geometry of numbers that we will need to prove Theorem~\ref{thm:vol}.  Let $S$ be a convex body symmetric about the origin such that $\intr(S) \cap \Z^n = \{0\}$. Such bodies are called $0$-symmetric lattice-free convex bodies.

\begin{theorem}[Minkowski's Convex Body Theorem, see \cite{gruber}, Theorem 1, Section 5, Chapter 2]\label{thm:minkowski1}
 If $S$ is a $0$-symmetric lattice-free convex body, then $\vol(S) \leq 2^n$.
\end{theorem}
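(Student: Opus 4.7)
The plan is to establish the contrapositive via the classical Blichfeldt--Minkowski pigeonhole argument: I will show that if $\vol(S) > 2^n$, then $S$ must contain a nonzero lattice point in its interior, contradicting the assumption $\intr(S) \cap \Z^n = \{0\}$.

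First I would scale $S$ down by a factor of $\tfrac12$, setting $T = \tfrac12 \intr(S)$, so that $\vol(T) = 2^{-n}\vol(S) > 1$. Partitioning $\R^n$ into the half-open unit cubes $Q_w = w + [0,1)^n$ for $w \in \Z^n$, I would consider the measurable pieces $T_w = T \cap Q_w$ and their translates $T_w - w \subseteq [0,1)^n$. Since
\begin{equation*}
  \sum_{w \in \Z^n} \vol(T_w - w) \;=\; \sum_{w \in \Z^n} \vol(T_w) \;=\; \vol(T) \;>\; 1 \;=\; \vol([0,1)^n),
\end{equation*}
the sets $\{T_w - w\}$ cannot be essentially pairwise disjoint. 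Hence there exist distinct $w_1, w_2 \in \Z^n$ and points $x_1 \in T_{w_1}$, $x_2 \in T_{w_2}$ with $x_1 - w_1 = x_2 - w_2$, and so $z := w_1 - w_2 = x_1 - x_2$ is a nonzero element of $\Z^n$.

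Next I would exploit the $0$-symmetry and convexity of $\intr(S)$ to place $z$ inside $\intr(S)$. Since $x_1, x_2 \in \tfrac12 \intr(S)$, we have $2x_1, 2x_2 \in \intr(S)$; by symmetry $-2x_2 \in \intr(S)$; and by convexity the midpoint
\begin{equation*}
  \tfrac12(2x_1) + \tfrac12(-2x_2) \;=\; x_1 - x_2 \;=\; z
\end{equation*}
also belongs to $\intr(S)$. Thus $z$ is a nonzero lattice point in $\intr(S)$, contradicting the lattice-freeness hypothesis and completing the proof.

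The only delicate step is the pigeonhole argument, namely ensuring that two of the translates $T_{w_1} - w_1$ and $T_{w_2} - w_2$ genuinely share a common point rather than merely overlap on a null set. I would handle this by countable additivity: if the translates were pairwise disjoint up to null sets, then $\vol(T) = \sum_w \vol(T_w - w) \leq \vol([0,1)^n) = 1$, contradicting $\vol(T) > 1$. Combined with inner regularity of Lebesgue measure (or, equivalently, taking representatives of positive density), one obtains the honest common point required for the argument.
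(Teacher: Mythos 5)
Your proof is correct: it is the standard Blichfeldt--Minkowski pigeonhole argument, and you handle the one delicate point (upgrading ``overlap in positive measure'' to an honest common point) properly. Note, however, that the paper does not prove this statement at all --- it quotes Minkowski's Convex Body Theorem as a classical result from Gruber--Lekkerkerker \cite{gruber} --- so your argument is simply a correct reproduction of the textbook proof rather than an alternative to anything in the paper.
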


If $\vol(S) = 2^n$ for some $0$-symmetric lattice-free convex body $S$, then $S$ is called an \emph{extremal} $0$-symmetric lattice-free convex body (see \cite{gruber}, Section 12, Chapter 2).

\begin{theorem}[Minkowski--Haj\'os, see \cite{gruber}, Section 12.4, Chapter 2]\label{thm:minkowski3}
Let $S$ be an extremal $0$-symmetric lattice-free parallelotope. Then there exists a unimodular transformation $U$ such that after applying $U$, $S$ will have two parallel facets given by $-1 \leq x_1 \leq 1$.
\end{theorem}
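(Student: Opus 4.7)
The plan is to prove the statement by induction on~$n$, using Haj\'os's theorem on cube tilings as the inductive engine. I write $S = \{\, x \in \R^n : |a^i \cdot x| \leq 1,\; i = 1, \ldots, n \,\}$ with linearly independent row vectors $a^i$ collected into an $n \times n$ matrix~$A$; the extremality condition becomes $|\det A| = 1$, and the lattice-free hypothesis is $\intr(S) \cap \Z^n = \{0\}$. The base case $n = 1$ is immediate, since the only $0$-symmetric lattice-free interval of length~$2$ is $[-1,1]$.

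The first step translates the hypothesis into a statement about cube tilings. The half-body $\tfrac12 S$ has volume~$1$, and by $0$-symmetry plus lattice-freeness its $\Z^n$-translates have pairwise disjoint interiors; since volume and covolume match, they tile~$\R^n$. Applying the linear map~$A$ converts this into a tiling of~$\R^n$ by translates of the standard cube $[-\tfrac12, \tfrac12]^n$ along the lattice $A\Z^n$. I now invoke Haj\'os's theorem on cube tilings: in any lattice tiling of~$\R^n$ by unit cubes, some two cubes share a full $(n-1)$-dimensional facet. The centers of such a pair differ by $\pm e_j$ for some~$j$, so $\pm e_j \in A\Z^n$; equivalently, $u := A^{-1} e_j$ is a primitive vector of~$\Z^n$.

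Next I block-diagonalize via a unimodular change of coordinates. Permuting coordinates I may assume $j = n$, and since $u$ is primitive integer there is a unimodular $V \in \mathrm{GL}_n(\Z)$ with $Vu = e_n$. The parallelotope $VS$ has facet-normal matrix $M = AV^{-1}$ satisfying $Me_n = AV^{-1}e_n = Au = e_n$, so its $n$-th column is~$e_n$ and $M = \bigl(\begin{smallmatrix} M' & 0 \\ m^T & 1 \end{smallmatrix}\bigr)$ for some $M' \in \R^{(n-1)\times(n-1)}$ with $|\det M'| = 1$ and some $m \in \R^{n-1}$. The first $n-1$ rows of~$M$ involve only $x' = (x_1, \ldots, x_{n-1})$ and cut out the $(n-1)$-dimensional parallelotope $S'' = \{\, x' : |M'_{i,*} \cdot x'| \leq 1 \,\}$, which is extremal by the determinant count and lattice-free in~$\Z^{n-1}$, because any nonzero $v' \in \intr(S'') \cap \Z^{n-1}$ would pair with the integer~$v_n$ nearest to $-m \cdot v'$ to give a nonzero interior lattice point $(v', v_n)$ of~$VS$, contradicting the hypothesis on~$S$. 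Applying the inductive hypothesis to~$S''$ yields a unimodular $U \in \mathrm{GL}_{n-1}(\Z)$ such that some row of $M'U^{-1}$ equals $\pm e_1^T$. Lifting to $\tilde U = \bigl(\begin{smallmatrix} U & 0 \\ 0 & 1 \end{smallmatrix}\bigr) \in \mathrm{GL}_n(\Z)$, the corresponding row of $M \tilde U^{-1} = \bigl(\begin{smallmatrix} M'U^{-1} & 0 \\ m^T U^{-1} & 1 \end{smallmatrix}\bigr)$ becomes $\pm e_1^T \in \R^n$, so $\tilde U V S$ has the required pair of parallel facets in the hyperplanes $x_1 = \pm 1$.

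The one genuinely hard ingredient, which I would treat as a black box cited from~\cite{gruber}, is Haj\'os's cube-tiling theorem itself. Its standard proof passes through a reformulation as a factorization problem for finite abelian groups: if $G = A_1 \cdots A_n$ with each $A_i$ of the cyclic-chain form $\{e, g_i, g_i^2, \ldots, g_i^{k_i-1}\}$, then some~$A_i$ is a subgroup. Haj\'os settled this by a delicate argument in the group ring~$\Z[G]$; this step is the substantive content of the theorem, and I would import it rather than attempt to reconstruct it here. The remaining ingredients — the passage between lattice-free parallelotopes and cube tilings, the block-triangularization, and the inductive descent — are the routine components of the argument.
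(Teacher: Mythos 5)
The paper does not prove this statement at all: it is imported as a classical result (Haj\'os's 1941 resolution of Minkowski's conjecture on cube tilings) with a citation to Gruber--Lekkerkerker, so there is no internal proof to compare yours against. What you have written is a correct and fairly standard reduction of the parallelotope formulation to the cube-tiling formulation. The passage from an extremal lattice-free $0$-symmetric parallelotope to a lattice tiling of $\R^n$ by unit cubes is sound (disjointness of the interiors of the $\Z^n$-translates of $\tfrac12 S$ follows from central symmetry, convexity and lattice-freeness, and a lattice packing by a compact convex body whose volume equals the covolume of the lattice is a tiling), the block-triangularization is correct, and the inductive descent is genuinely needed: Haj\'os's cube-tiling theorem hands you an integral \emph{column} of $A^{-1}$, while the theorem's conclusion amounts to some \emph{row} of $A$ being a primitive integer vector, and your induction is what converts the one into the other. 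One assertion you should justify rather than label ``equivalently'': the primitivity of $u = A^{-1}e_j$ does not follow formally from $e_j \in A\Z^n$, but it does follow from lattice-freeness, since $u = ku'$ with $k \geq 2$ and $u' \in \Z^n$ would give $Au' = \tfrac1k e_j$, making $u'$ a nonzero lattice point in $\intr(S)$. The honest caveat, which you state yourself, is that the entire substantive content --- Haj\'os's factorization argument in the group ring --- remains a black box; since that is precisely what the paper is citing, your proposal is best read as a verification that the paper's formulation is equivalent to the textbook cube-tiling formulation, rather than as an independent proof of the theorem.
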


We use this theorem to prove a lemma about extremal $0$-symmetric lattice-free parallelotopes.

\begin{lemma}\label{lem:extremal_parallel}
Let $S \subseteq \R^n$ be an extremal $0$-symmetric lattice-free parallelotope. If every facet of $S$ has at most (and therefore exactly) one integral point in its relative interior, then $S$ is a unimodular transformation of the cube $\{-1 \leq x_i \leq 1,\; i = 1, \ldots, n\}$.
\end{lemma}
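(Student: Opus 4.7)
The plan is to prove the lemma by induction on~$n$. The base case $n=1$ is immediate: a $0$-symmetric interval of length~$2$ in $\R$ is $[-1,1]$.

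For the inductive step, I would first use Theorem~\ref{thm:minkowski3} to reduce, after a unimodular change of coordinates, to the case where two facets of $S$ lie on $x_1 = \pm 1$. In this coordinate system the $n$ half-generators of $S$ can be written as $v_1 = (1, c)$ and $v_j = (0, w_j)$ for $j = 2, \ldots, n$, with $c, w_2, \ldots, w_n \in \R^{n-1}$, and extremality ($\vol(S) = 2^n$) becomes $\lvert \det(w_2, \ldots, w_n) \rvert = 1$. I would then consider the slice $S' = S \cap \{x_1 = 0\}$, viewed as the parallelotope in $\R^{n-1}$ generated by $w_2, \ldots, w_n$. The next task is to check that $S'$ inherits all the hypotheses of the lemma: it is $0$-symmetric and extremal by construction; it is lattice-free in $\Z^{n-1}$ because its relative interior is contained in $\intr(S)$; and each facet of $S'$, being cut out in $S$ by a condition of the form $\{\mu_j = \pm 1\} \cap \{\mu_1 = 0\}$, has its relative-interior lattice points contained in the relative interior of the corresponding facet $\{\mu_j = \pm 1\}$ of $S$, so by hypothesis there is at most one such point. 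The inductive hypothesis then furnishes a unimodular matrix $U$ of size $(n-1)\times(n-1)$ with $Uw_2, \ldots, Uw_n$ equal to the standard basis vectors of $\R^{n-1}$.

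To finish, I would lift $U$ to the unimodular transformation of $\R^n$ that acts as the identity on the first coordinate and as $U$ on the last $n-1$ coordinates. After applying this lift, $S$ is generated by $(1, Uc), e_2, \ldots, e_n$, and the facet $F_+ = S \cap \{x_1 = 1\}$ is a unit cube in the hyperplane $\{x_1 = 1\}$ centered at $(1, Uc)$. The number of integer points in the relative interior of $F_+$ equals $\prod_{i=1}^{n-1} \lvert (Uc_i - 1, Uc_i + 1) \cap \Z \rvert$; each factor is at least~$1$ and equals~$1$ precisely when $Uc_i \in \Z$. The hypothesis that $F_+$ has at most one interior lattice point therefore forces $Uc \in \Z^{n-1}$ (and in fact the count is exactly one, matching the parenthetical in the lemma statement). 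A final shearing unimodular map $\Phi(x) = x - x_1 \cdot (0, Uc)$ sends $(1, Uc) \mapsto e_1$ and fixes each $e_j$ for $j \geq 2$, so the composite of all three unimodular transformations maps $v_1, \ldots, v_n$ to $e_1, \ldots, e_n$ and hence $S$ to $[-1,1]^n$.

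The main obstacles I anticipate are (i) verifying that the slice $S'$ actually inherits the ``at most one interior lattice point per facet'' condition from $S$, which relies on carefully matching up facets of $S'$ with facets of $S$ using the Minkowski--Haj\'os normal form, and (ii) extracting the integrality of $Uc$ from the facet hypothesis applied to the unit cube centered at $(1, Uc)$ in the final step; the latter is really what prevents a generic extremal parallelotope from satisfying the hypothesis and is what pins down the cube as the unique possibility.
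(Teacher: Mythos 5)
Your proposal is correct and follows essentially the same route as the paper's proof: induction on $n$, the Minkowski--Haj\'os normal form to place two facets on $x_1=\pm1$, the observation that the slice at $x_1=0$ inherits extremality, lattice-freeness, and the one-interior-lattice-point-per-facet condition, and finally the argument that the top facet must be a lattice translate of the cube (your explicit count $\prod_i\lvert(Uc_i-1,Uc_i+1)\cap\Z\rvert$ is exactly the paper's remark that a non-lattice translate of $[-1,1]^{n-1}$ contains at least two relative-interior lattice points). The only differences are cosmetic bookkeeping with explicit generators and the final shearing map.
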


\begin{proof}
We prove this by induction on the dimension $n$. For $n=1$, this is trivial. Consider $n \geq 2$. The Minkowski--Haj\'os theorem implies that we can make a unimodular transformation such that $S$ = $\conv\{(S\cap \{x_1 = -1\}) \cup (S\cap \{x_1 = 1\})\}$. Note that $S\cap \{x_1 = -1\}$, $S\cap \{x_1 = 0\}$ and $S\cap \{x_1 = 1\}$ are all translations of each other. Therefore, $2^n = \vol(S)= 2\vol(S\cap \{x_1 = 0\})$ (here we measure volume of $S\cap \{x_1 = 0\}$ in the $n-1$ dimensional linear space $x_1 = 0$). Therefore, $S\cap \{x_1 = 0\}$ has volume $2^{n-1}$. Therefore, $S\cap \{x_1 = 0\}$ is also an extremal $0$-symmetric lattice-free parallelotope in the linear space $x_1=0$. If any facet $F$ of  $S\cap \{x_1 = 0\}$ contains 2 or more lattice points in its relative interior, then the facet of $S$ passing through $F$ will contain these lattice points in its relative interior, in contradiction to the hypothesis of the theorem. Therefore, every facet of $S\cap \{x_1 = 0\}$ contains at most one lattice point in its interior. By the induction hypothesis, $S\cap \{x_1 = 0\}$ is equivalent to the cube $\{-1 \leq x_i \leq 1,\; i= 2, \ldots, n\} \cap \{x_1 = 0\}$. Finally, observe that $S\cap \{x_1 = -1\}$ and $S\cap \{x_1 = 1\}$ are translations of $S\cap \{x_1 = 0\}$. Since $S\cap \{x_1 = 0\}$  is equivalent to the cube $\{-1 \leq x_i \leq 1,\; i= 2, \ldots, n\} \cap \{x_1 = 0\}$, any translation by a non lattice vector will contain at least 2 integral points in its relative interior. But the facets $S\cap \{x_1 = -1\}$ and $S\cap \{x_1 = 1\}$ contain at most 1 lattice point in their relative interior. Therefore, they are in fact lattice translates of $S\cap \{x_1 = 0\}$. This proves the lemma.
\end{proof}

\bigskip
\begin{proof}[Proof of Theorem~\ref{thm:vol}]

Since $f = v^0$, using \eqref{eq:facet-desc} the parallelotope $R_0$ can also be described as
\[\begin{array}{rcl} R_0 & = & \{\,x \in \R^n \st 0 \leq a^i\cdot x \leq a^i\cdot v^0 \;\;\;\; \forall i = 1, \ldots, n\,\} \\ & = &\{\,x \in \R^n \st 0 \leq a^i\cdot x \leq a^i\cdot y^i \;\;\;\; \forall i = 1, \ldots, n\,\},\end{array}\]

since $a^iv^0 = a^iy^i$. We consider the set $S = \{x \in \R^n \st -a^i\cdot y^i \leq a^i\cdot x \leq a^i\cdot y^i\}$. Clearly $S$ is a centrally symmetric convex body containing the origin. Moreover, $\vol(S) = 2^n\vol(R_0)$. We first claim

\begin{claim}
$\intr(S)\cap \Z^n = \{0\}. $
\end{claim}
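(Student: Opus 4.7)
The plan for proving $\intr(S) \cap \Z^n = \{0\}$ is a short case analysis that leverages the central symmetry of $S$ together with the lattice-freeness of $\Delta$ and the hypothesis that the facet of $\Delta$ containing $y^0 = 0$ has \emph{only} $y^0$ as its relative-interior lattice point.

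First, I would suppose for contradiction that there exists $z \in \intr(S) \cap \Z^n$ with $z \neq 0$. Unpacking the definition of $S$, membership in $\intr(S)$ is the statement $|a^k \cdot z| < a^k \cdot y^k$ for every $k = 1, \ldots, n$, and by symmetry the same bounds hold for $-z$. In particular, both $z$ and $-z$ satisfy the $n$ facet inequalities $a^k \cdot x < a^k \cdot y^k$ of $\Delta$ strictly for $k \in \{1, \ldots, n\}$; the only facet constraint of $\Delta$ not directly forced by membership in $S$ is the one with normal $a^0$, namely $a^0 \cdot x \leq 0$.

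Next, I would split into three cases according to the sign of $a^0 \cdot z$. If $a^0 \cdot z < 0$, then $z$ satisfies all $n+1$ facet inequalities of $\Delta$ strictly, placing $z \in \intr(\Delta) \cap \Z^n$ and contradicting the lattice-freeness of $\Delta$. If $a^0 \cdot z > 0$, the same argument applied to $-z \in \intr(S) \cap \Z^n$, for which $a^0 \cdot (-z) < 0$, produces the contradiction. If $a^0 \cdot z = 0$, then $z$ lies on the hyperplane $\{x : a^0 \cdot x = 0\}$ bounding the facet of $\Delta$ opposite $v^0$, and the previously established strict inequalities show that $z$ lies in the \emph{relative interior} of that facet; by hypothesis the only lattice point there is $y^0 = 0$, forcing $z = 0$ and again contradicting $z \neq 0$.

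There is no serious obstacle here: the argument is pure sign-chasing. The crux is the recognition that the inequalities defining $S$ are precisely the two-sided versions of $n$ of the $n+1$ facet constraints of $\Delta$, so that any lattice point in $\intr(S)$ must, after possibly replacing it with its negative, either be pushed into $\intr(\Delta)$ or land on the single special facet whose relative-interior lattice points are completely accounted for by the hypothesis.
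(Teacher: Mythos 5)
Your proof is correct and follows essentially the same argument as the paper: use central symmetry to get both $z$ and $-z$ strictly inside the $n$ two-sided constraints, then observe that one of them satisfies $a^0\cdot x\leq 0$, landing either in $\intr(\Delta)$ (contradicting lattice-freeness) or in the relative interior of the facet through $0$ (contradicting the unique-lattice-point hypothesis). The only cosmetic difference is that you organize this as an explicit three-way sign split on $a^0\cdot z$, while the paper phrases it as a single disjunction.
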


\begin{proof}
Suppose to the contrary that $p \in \intr(S)\cap \Z^n$ and $p\neq0$. Since $S$ is centrally symmetric, $-p  \in \intr(S)\cap \Z^n$. Note that this implies that both $p$ and $-p$ satisfy $ a^i\cdot x < a^i\cdot y^i$ for all $i=1, \ldots, n$. Moreover, for at least one of $p$ and $-p$, the inequality $a^0\cdot x \leq 0$ is valid. This shows that either $p$ or $-p$ is in the interior of $\Delta$ or both of them lie on the hyperplane $a^0\cdot x = 0$ and so both of them lie in the relative interior of the facet corresponding to $a^0\cdot x \leq 0$. This is a contradiction.
\end{proof}

By this claim, Theorem~\ref{thm:minkowski1} applies to $S$ and we conclude that $\vol(S) \leq 2^n$. Since $\vol(S) = 2^n\vol(R_0)$, we immediately conclude that $\vol(R_0) \leq 1$.

Now we prove the second part of the theorem. If $\Delta$ is an affine
unimodular transformation of $\conv\{0, ne^1, \ldots, ne^n\}$, then up to an
affine unimodular transformation the region $R_0$ is given by the cube $\{\,x \st 0 \leq x_i \leq 1, \;i = 1, \ldots, n\,\}$, and therefore $\vol(R_0) = 1$. For the converse, let us assume, $\vol(R_0) = 1$ and so $\vol(S) = 2^n$. So $S$ is an extremal parallelotope. Now we claim that $S$ has at most one integral point in the relative interior of each facet. Suppose not and let $p_1$ and $p_2$ be two points in the relative interior of a facet. By the centrally symmetric property, we can assume that in fact {\em two} parallel facets of $S$, $a^i \cdot x \leq a^i \cdot y^i$ and $-a^i \cdot y^i \leq a^i \cdot x$ have two integral points in their relative interior. We label these integral points as $p_1$ and $p_2$ lying on $a^i \cdot x \leq a^i \cdot y^i$, and so $-p_1$ and $-p_2$ lie on $-a^i \cdot y^i \leq a^i \cdot x$. Now the inequality $a^i \cdot x \leq a^i \cdot y^i$ defines a facet of $\Delta$, and since this facet of $\Delta$ has exactly one integral point in its relative interior, either $p_1$ or $p_2$ needs to satisfy $a^0\cdot x \geq 0$. Suppose $p_1$ satisfies this inequality, then $a^0\cdot (-p_1) \leq 0$ and therefore $-p_1$ either lies in the interior of $\Delta$ or in the relative interior of the facet $a^0 \cdot x \leq 0$. In either case, we have a contradiction to the assumptions on $\Delta$.

So we conclude that $S$ has at most one integral point in the relative
interior of each facet. By Lemma~\ref{lem:extremal_parallel}, we have that $S$
is equivalent to the cube $\{\,-1 \leq x_i \leq 1, \;i = 1, \ldots, n\,\}$ up to
unimodular transformation. Therefore, up to a unimodular transformation, $\Delta$ has $n$~facets given by $\{\,x_i \leq 1, \;i = 1, \ldots, n\,\}$. Now we finally prove that the facet $F$ of $\Delta$ passing through $0$ is given by $x_1 + x_2 + \ldots + x_n \geq 0$. This will complete the proof of the second part of Theorem~\ref{thm:vol}.

We first claim that the standard unit vector $e^i$ is in the relative interior of the facet $x_i \leq 1$ of $\Delta$. This is because the point $-e^i$ is strictly cut off by the facet $F$ of $\Delta$ passing through $0$, since $\Delta$ has no integer point in its interior and $0$ is the unique integer point in the relative interior of $F$. So $e^i$ has to be in the relative interior of the facet $x_i \leq 1$ of~$\Delta$.

Consider the following set of linearly independent points $p^i, i=1, \ldots, n-1$ given by

\[
p^i_j = \left\{ \begin{array}{rl} 1 & j = i \\ -1 & j = n \\ 0 & j \neq i, n.  \end{array}\right.
\]

Unless the facet $F$ of $\Delta$ passing through $0$ is given by $x_1 + x_2 + \ldots + x_n \geq 0$, at least one of $p^i$ or $-p^i$ is strictly valid for~$F$ (since this is a set of $2(n-1)$ points  with affine dimension $n-1$, all lying on $x_1 + x_2 + \ldots + x_n = 0$), say for $i=k$. If $p^k$ is strictly valid for~$F$, then $p^k$ lies in the relative interior of the facet $x_k \leq 1$ of $\Delta$. If $-p^k$ is strictly valid for~$F$, then $-p^k$ lies in the relative interior of the facet $x_n \leq 1$ of $\Delta$. This would be a contradiction to the assumption that each facet of $\Delta$ contains exactly one integer point in its relative interior, since we observed earlier that $e^i$ is certainly in the relative interior of the facet $x_i \leq 1$.
\end{proof}
\bigskip

\section{2-Partitionable Simplices} \label{sec:multi}

\subsection{Proof of Theorem~\ref{thm:multiple}}

Let $\Delta \subset \R^{n+1}$ be a maximal lattice-free 2-partitionable simplex.
Without loss of generality we may assume that the consecutive lattice hyperplanes in the 2-partition are $x_{n+1} = 0$ and $x_{n+1} = -1$. We further assume that $x_{n+1} \geq -1$ defines a facet of $\Delta$.
 Let $F_{n+1} = \Delta \cap \{x_{n+1} = -1\}$ denote this facet.

Suppose that $\Delta \cap \{x_{n+1} = 0\}$ is {\em not} an affine unimodular
transformation of the simplex $\conv\{0, ne^1, \ldots, ne^n\}$. Then consider the lifting
region $R(\hat v)$ when $f$ is one of the vertices~$\hat v$ of the facet~$F_{n+1}$.
Consider the vertex $\bar v$ of  $\Delta \cap \{x_{n+1} =
0\}$ lying on an edge incident to $\hat v$. Since $\Delta$ is 2-partitionable, $\Delta \cap \{x_{n+1} =
0\}$ is a maximal lattice-free simplex when viewed as embedded in $\R^n$. Indeed, it is lattice-free because $\Delta$ is lattice-free and each facet of $\Delta \cap \{x_{n+1} = 0\}$ contains a lattice point in its relative interior, and hence it is a maximal lattice-free set. Considering $\Delta \cap \{x_{n+1} =
0\}$ as a maximal lattice-free simplex in $n$ dimensions, let $\bar R$ be the region \eqref{eq:R} formed by the vertex $\bar v$. Note that the region $R(\hat v)$ is a
cylinder over $\bar R$ with height 1. Theorem~\ref{thm:main} implies that
$\bar R$ has $n$-dimensional volume modulo lattice translations strictly less
than 1 and therefore, so does $R(\hat v)$. So for every vertex~$v$ except
possibly
the one not lying on the facet $F_{n+1}$, we have $\vol_{\T^{n+1}}(R(v)/\Z^{n+1}) < 1$. Combined with Theorem~\ref{lem:mod_vol}, this implies that $\vol_{\T^{n+1}}(R(f)/\Z^{n+1}) < 1$ for all $f \in \intr(\Delta)$. So $\Delta$ is a body with multiple liftings.

Suppose now that $\Delta \cap \{x_{n+1} = 0\}$ is an affine unimodular
transformation of the simplex $\conv\{0, ne^1, \ldots, ne^n\}$. We will show
that the region $R(f)$ is such that we have $\vol_{\T^{n+1}}(R(f)/\Z^{n+1}) =
1$ when $f$ is at the vertices. As observed in the first case, for the
vertices on the facet $F_{n+1}$, the set $R(f)$ is simply a cylinder of
height~1 over a region $\bar R$ from the simplex $\Delta \cap \{x_{n+1} =
0\}$ defined by one of its vertices. Again by Theorem~\ref{thm:main} we know that this has volume 1 modulo
the lattice.

We finally argue about the vertex $v$ that does not lie on the facet $F_{n+1}$
and show that $\vol_{\T^{n+1}}(R(v)/\Z^{n+1}) = 1$. By
Theorem~\ref{lem:mod_vol}, this will imply that
$\vol_{\T^{n+1}}(R(f)/\Z^{n+1}) = 1$ for all $f \in \intr(\Delta)$. So
$\Delta$ is a body with a unique lifting for all $f \in \intr(\Delta)$.

Let $S_0 = \Delta \cap \{x_{n+1} = 0\}$. By a unimodular transformation, we
assume that $S_0=\conv\{0, ne^1, \ldots, ne^n\}$. This then implies that the
facet description of $\Delta$ is given by the following set of inequalities:
\begin{subequations}
  \begin{alignat}{2}
    -x_i + \delta_ix_{n+1} & \leq 0 &&\qquad \text{for } i \in \{1, \ldots, n\}, \label{eq:facet-D1}\\
    \sum_{i=1}^nx_i + \delta_{n+1}x_{n+1} & \leq n, \label{eq:facet-D2}\\
    x_{n+1} &\geq -1,\label{eq:facet-D3}
  \end{alignat}
\end{subequations}
where $\delta_i$, $i = 1, \ldots, n+1$, are some constants in $\R$.

\begin{claim}
The facet $F_{n+1}$ contains a copy of $S_0$ translated by an integer vector.
\end{claim}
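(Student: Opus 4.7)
The plan is to translate the geometric claim into an arithmetic inequality on the parameters $\delta_1,\ldots,\delta_{n+1}$, and then verify this inequality via a short case analysis using the lattice-freeness of $\Delta$.

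From the facet description \eqref{eq:facet-D1}--\eqref{eq:facet-D3}, $F_{n+1}$ is the $n$-simplex in the hyperplane $\{x_{n+1}=-1\}$ cut out by $x_i\geq -\delta_i$ for $i=1,\ldots,n$ together with $\sum_{i=1}^n x_i\leq n+\delta_{n+1}$. An integer translate $t+S_0$ with $t=(t_1,\ldots,t_n,-1)\in\Z^{n+1}$ lies in $F_{n+1}$ iff each of its $n+1$ vertices $t,\,t+ne^1,\ldots,t+ne^n$ does; checking vertex by vertex, the binding constraints collapse to $t_i\geq -\delta_i$ for $i=1,\ldots,n$ and $\sum_{i=1}^n t_i\leq \delta_{n+1}$. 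Using $\lceil-\delta_i\rceil=-\lfloor\delta_i\rfloor$, the existence of integers $t_1,\ldots,t_n$ meeting these is equivalent to the single arithmetic inequality
\[
\sum_{i=1}^{n+1}\lfloor\delta_i\rfloor \;\geq\; 0. \qquad (\ast)
\]

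Set $\sigma:=\sum_{i=1}^{n+1}\delta_i$. I verify $(\ast)$ in two cases. If $\sigma\geq n$, then summing $\lfloor\delta_i\rfloor > \delta_i - 1$ over $i=1,\ldots,n+1$ yields $\sum_i\lfloor\delta_i\rfloor > \sigma-(n+1) \geq -1$, and integrality forces the sum to be $\geq 0$. If instead $\sigma<n$, then the open set $\{(x,1)\in\R^{n+1} : x_i>\delta_i,\ \sum_{i=1}^n x_i<n-\delta_{n+1}\}$ is a nonempty subset of $\intr(\Delta)$; maximal lattice-freeness of $\Delta$ forces it to contain no lattice point, i.e., the integer program $x_i\geq\lfloor\delta_i\rfloor+1$, $\sum_i x_i\leq n-\lfloor\delta_{n+1}\rfloor-1$ is infeasible, which is precisely $(\ast)$.

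The main point to track carefully is the floor/ceiling bookkeeping: both ``an integer translate of $S_0$ fits inside $F_{n+1}$'' and ``no integer point lies in the open slice $\intr(\Delta)\cap\{x_{n+1}=1\}$'' reduce to the same quantity $\sum_{i=1}^{n+1}\lfloor\delta_i\rfloor$, and this duality is what makes the argument go through.
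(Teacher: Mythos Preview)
Your proof is correct and follows essentially the same route as the paper. The paper tests the single lattice point $p^2=(\lfloor\delta_1\rfloor+1,\ldots,\lfloor\delta_n\rfloor+1,1)$, observes it satisfies \eqref{eq:facet-D1} and \eqref{eq:facet-D3} strictly, and uses lattice-freeness to conclude it must fail \eqref{eq:facet-D2}, giving $\delta_{n+1}\geq -\sum_{i=1}^n\lfloor\delta_i\rfloor$; this is exactly your Case~2 argument with $p^2$ being the ``corner'' of your integer program, and the paper then verifies (as you do implicitly via $(\ast)$) that the translate by $p^1=(-\lfloor\delta_1\rfloor,\ldots,-\lfloor\delta_n\rfloor,-1)$ fits in $F_{n+1}$.

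One remark: your Case~1 is redundant. The slice $\intr(\Delta)\cap\{x_{n+1}=1\}$ is always a subset of $\intr(\Delta)$ and hence lattice-free; you never actually use its nonemptiness, and the equivalence ``integer program infeasible $\Leftrightarrow$ $(\ast)$'' holds regardless of $\sigma$. So the Case~2 argument alone already proves $(\ast)$ for all~$\sigma$.
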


\begin{proof}
Consider the point $p^1 = (- \lfloor \delta_1 \rfloor, - \lfloor \delta_2 \rfloor,
\ldots, - \lfloor \delta_n \rfloor, -1)$ and $p^2 = (\lfloor \delta_1 \rfloor+1,
\lfloor \delta_2 \rfloor+1, \ldots, \lfloor \delta_n \rfloor+1, +1)$. We first
observe that $p^2$ satisfies the constraints \eqref{eq:facet-D1}
strictly. Indeed, $-p^2_i + \delta_ip^2_{n+1} = - \lfloor \delta_i \rfloor - 1 +
\delta_i < 0$. Since $\Delta$ is lattice-free, and $p^2 \in \Z^{n+1}$ satisfies
\eqref{eq:facet-D1} and \eqref{eq:facet-D3} strictly, it cannot satisfy
inequality \eqref{eq:facet-D2} strictly. This implies that
\begin{equation}\label{eq:delta}
\delta_{n+1} \geq - \sum_{i=1}^n\lfloor\delta_i\rfloor.
\end{equation}

Now we claim that $p^1$ and $p^1 + ne^j$ for  $j=1, \ldots, n$ all satisfy the
constraints \eqref{eq:facet-D1} and~\eqref{eq:facet-D2}, and satisfy \eqref{eq:facet-D3} at equality,
and hence all lie on the facet $F_{n+1}$. This will prove the
claim. Clearly, \eqref{eq:facet-D3} is satisfied at equality for all of these
$n+1$ points. We now check \eqref{eq:facet-D1} and
\eqref{eq:facet-D2}. Consider~$p^1$. Since $-p^1_i  + \delta_ip^1_{n+1} =
\lfloor \delta_i \rfloor - \delta_i \leq 0$, the point~$p^1$ satisfies
\eqref{eq:facet-D1}. Next, $\sum_{i=1}^np^1_i + \delta_{n+1}p^1_{n+1} =
- \sum_{i=1}^n\lfloor \delta_i \rfloor - \delta_{n+1} \leq 0 \leq n$ by
\eqref{eq:delta}, showing that \eqref{eq:facet-D2} is satisfied. Consider $p^1
+ ne^j$. Evaluating the left-hand side of \eqref{eq:facet-D1}, we get no
change from the computation for $p^1$ except when $i=j$. In this case, we find
$\lfloor \delta_i \rfloor - \delta_i - n \leq 0$. Finally for constraint
\eqref{eq:facet-D2}, we have $ - \sum_{i=1}^n\lfloor \delta_i \rfloor -
\delta_{n+1} + n \leq n$ because of \eqref{eq:delta}.
\end{proof}

The above claim implies that after applying a unimodular transformation~$U$,
the translated set $S_0 - e^{n+1}$  is the copy of $S_0$ lying on the facet
$F_{n+1}$ (i.e., the two copies lie vertically on top of each other). This in
turn implies that after applying~$U$, the simplex~$\Delta$ can be described by \eqref{eq:facet-D1}, \eqref{eq:facet-D2} and \eqref{eq:facet-D3} with $\delta_i \geq 0$ for all $i=1,\ldots, n+1$, since the points $(0, 0,\ldots, 0, -1), (n, 0,\ldots, 0, -1) \in S_0 - e^{n+1}$ satisfy all the constraints \eqref{eq:facet-D1},  \eqref{eq:facet-D2} and~\eqref{eq:facet-D3}.

Along with the standard unit vectors $e^1, \ldots, e^{n+1}$, we use $e^0$ to
denote the origin $0$. Let $d = \sum_{i=1}^{n+1}\delta_i$. Then $d>0$ because
otherwise the system~\eqref{eq:facet-D1}, \eqref{eq:facet-D2} and
\eqref{eq:facet-D3} defines a cylinder over $S_0$ and not a simplex.
Let $\hat f \in \R^{n+1}$ be the point given by $\hat f_{n+1} = 0$ and $\hat
f_{i} = \frac{\delta_i n}{d}$. Since $\hat f_i \geq 0$ for all $i \in \{1, \ldots, n\}$ and $\sum_{i=1}^n \hat f_i\leq n$, we have
$\hat f \in S_0$. We define the following $n+1$ regions: $$D_k = \Bigl\{\,x \in \R^{n+1}
\mathrel{\Big|} x = \hat f + \sum_{j\in \{0, \ldots, n\}\setminus \{k\}}\!\!\!\!\! \alpha_j\bigl(e^j -
\tfrac1{n}{\hat f}\bigr), \; 0 \leq \alpha_j \leq 1\,\Bigr\} \quad\text{for } k=0,\ldots, n.$$

Note that each $D_k \subseteq \R^{n+1}\cap \{x_{n+1} = 0\}$. We consider $S_0$ as an $n$-dimensional simplex embedded in $\R^{n+1}$ containing $\hat f$, then we can define the region $R(\hat f)$ for $S_0$ as in \eqref{eq:R}. If we view $R(\hat f)$ as a set embedded in $\R^{n+1}$, then we observe that each $D_k$ is a parallelotope that appears in the union defining $R(\hat f)$. In fact, all other parallelotopes in the definition of $R(\hat f)$ are of dimension $n-2$. Therefore, if we view $R(\hat f)$ and $\bigcup_k D_k$ as sets of $\R^n$, $\vol_{\T^n}((\bigcup_k D_k)/\Z^n) = \vol_{\T^n}(R(\hat f))/\Z^n) = 1$ by Theorem~\ref{thm:main}. We now show that for every point $x$ in $\bigcup_k D_k$, there exists a segment $\{\, x + \lambda e^{n+1} \in \R^{n+1} \st \lambda_0 \leq \lambda \leq \lambda_0+1 \,\}$ of height~1 above $x$ that lies in $R(v)$ (recall $v$ is the vertex of $\Delta$ not on the facet $F_{n+1}$). This will then show that $\vol_{\T^{n+1}}(R(v)/\Z^{n+1})=1$ and we will be done.

Consider any $x \in D_k$ for some $k \in \{1, \ldots, n\}$. This means
\begin{displaymath}
  x = \hat f + \sum_{j\in \{0, \ldots, n\}\setminus \{k\}}\!\!\!\!\!\alpha_j\bigl(e^j - \tfrac1{n}{\hat f}\bigr), \quad 0 \leq \alpha_j \leq 1.
\end{displaymath}
Let $\alpha = \sum_{j\in \{0, \ldots, n\}\setminus \{k\}}\alpha_j$. Then $x = (1-\frac{\alpha}{n})\hat f + \sum_{j\in \{0, \ldots, n\}\setminus \{k\}}\alpha_j e^j$. Consider the two points $x^1, x^2 \in \R^{n+1}$ given by $x^1 = x +  \bigl((1-\frac{\alpha}{n})\frac{n}{d} - 1\bigr)e^{n+1}$ and $x^2 = x + \bigl((1-\frac{\alpha}{n})\frac{n}{d}\bigr)e^{n+1}$. Consider the parallelotope $\hat R$ given by all points $x\in \R^n$ such that
\begin{subequations}
  \begin{alignat}{2}
    -1 - \delta_i \leq -x_i + \delta_i x_{n+1} &\leq 0 &\quad &\text{for } i \in \{1,\ldots, n\}\setminus \{k\}, \label{eq:R1} \\
    -\delta_k \leq -x_k + \delta_kx_{n+1} &\leq 0, & & \label{eq:R2}\\
    n-1 - \delta_{n+1} \leq \sum_{i=1}^nx_i + \delta_{n+1}x_{n+1} &\leq n. &
    & \label{eq:R3}
  \end{alignat}
\end{subequations}

The parallelotope $\hat R$ is one of the form \eqref{eq:facet-desc} with
$y^{ik} = e - e^k - e^{n+1}$, where $e$ is the vector of all 1's except the
last coordinate, which is 0. This is a lattice point in $S_0 - e^{n+1}$. Therefore, $\hat R \subseteq R(v)$. We
now check that $x^1$ and $x^2$ both satisfy the constraints \eqref{eq:R1},
\eqref{eq:R2} and \eqref{eq:R3}. We do the computation for $x^1$; the case of $x^2$ can be argued similarly. Plugging $x^1$ into the middle term in \eqref{eq:R1}, we get $$- \bigl(1-\tfrac{\alpha}{n}\bigr)\tfrac{\delta_i n}{d} - \alpha_i + \delta_i\bigl( (1-\tfrac{\alpha}{n})\tfrac{n}{d} - 1)\bigr) = -\alpha_i - \delta_i.$$ Since $0 \leq \alpha_i \leq 1$ and $\delta_i \geq 0$, the constraints \eqref{eq:R1} are satisfied.  Plugging $x^1$ into the middle term in \eqref{eq:R2}, we get $$- \bigl(1-\tfrac{\alpha}{n}\bigr)\tfrac{\delta_k n}{d} + \delta_k\bigl( (1-\tfrac{\alpha}{n})\tfrac{n}{d} - 1)\bigr) = -\delta_k$$ and therefore \eqref{eq:R2} is satisfied. Finally, plugging $x^1$ into the middle term in \eqref{eq:R3}, we get $$\textstyle\bigl(1-\frac{\alpha}{n})(\sum_{i=1}^n \delta_i)\tfrac{n}{d} + \sum_{j\in\{1, \ldots, n\}\setminus\{k\}}\alpha_j +  \delta_{n+1}\bigl( (1-\frac{\alpha}{n})\frac{n}{d} - 1)\bigr) = n - \alpha_0 - \delta_{n+1}.$$ Using $0 \leq \alpha_0 \leq 1$ and $\delta_{n+1} \geq 0$, we see that \eqref{eq:R3} is satisfied. Since $\hat R$ is a convex parallelotope, the entire segment $[x^1, x^2]$ of height~1 is contained in $\hat R$ and hence in $R(v)$ because $\hat R \subseteq R(v)$.

We finally deal with $x \in D_0$. For this purpose we use the parallelotope
\begin{alignat}{2}
-1 - \delta_i \leq -x_i + \delta_i x_{n+1} &\leq 0 & \quad& \text{for } i \in \{1,\ldots, n\},\\
n - \delta_{n+1} \leq \sum_{i=1}^nx_i + \delta_{n+1}x_{n+1}  &\leq  n, &&
\end{alignat}
which is again obtained using the form \eqref{eq:facet-desc} with $y^{ik} = e
- e^{n+1}$. Now the point $x$ can be represented as $x = \hat f\bigl(1 - \frac1n \sum_{j=1}^n\alpha_j\bigr) + \sum_{j = 1}^n\alpha_j e^j$. We set $\alpha = \sum_{j=1}^n\alpha_j$ and $x^1 = x + (\frac{n}{d} - 1 -\frac{\alpha}{d})e^{n+1}$ and $x^2 =  x + (\frac{n}{d} -\frac{\alpha}{d})e^{n+1}$. Similar computations as before can be carried out to check that the entire segment $[x^1,x^2]$ of height~1 lies in this parallelotope.

\subsection{A simplex with multiple integer points in the interior of a facet with lifting volume less than 1}\label{sec:construction}

The proof of Theorem~\ref{thm:multiple} also shows how to explicitly construct a simplex~$\Delta\subseteq\R^{n+1}$ with more than
one lattice point in the relative interior of a facet and $\vol(R(f))<1$ when
$f$ is one of the vertices. Below we make the construction of such an example
more concrete for the case $n=2$. Observe that for $n=1$, we know that any maximal lattice-free triangle $\Delta \subset \R^2$ with more than one lattice point in the relative interior of a side is a Type 2 triangle, according to the classification by Dey and Wolsey in~\cite{dw}. Then $\Delta \cap H_2$ as described in the statement of Theorem~\ref{thm:multiple} is always a unimodular transformation of $\conv\{0, e^1\}$. So for $n=1$, we cannot construct such an example with multiple liftings.

We construct $\Delta \subset \R^3$ as follows.  One facet is given by $x_1 \geq 0$. $\Delta \cap \{x_1 = 1\}$ is a Type~3 triangle (following the classification of Dey and Wolsey~\cite{dw}). $\Delta \cap \{x_1 = 0\}$ is a homothetic copy of this Type~3 triangle, blown up by a very large factor (hence, the number of integral points in the interior of this facet can be made arbitrarily large). This implies that the vertex of $\Delta$ opposite the facet $x_1 \geq 0$ is very close to the plane $x_1 = 1$. Now when $f$ is one of the vertices of $\Delta$ on the facet $x_1 \geq 0$, as observed in the proof of Theorem~\ref{thm:multiple}, the lifting region is a cylinder of height~1 over the lifting region of the Type~3 triangle $\Delta \cap \{x_1 = 1\}$ which has area $< 1$, and so the volume of the lifting region for $\Delta$ with this position of $f$ is less than 1.

This construction can be generalized to higher dimensions where $\Delta \cap \{x_1 = 1\}$ is a maximal lattice-free $(n-1)$-simplex not equal to an affine unimodular transformation of $\conv\{0, (n-1)e^1, \ldots, (n-1)e^{n-1}\}$.

\section*{Acknowledgments} We would like to thank Christian Wagner and two anonymous referees for many insightful and helpful comments, which have helped us to make the presentation of our results more precise and clear.


\begin{thebibliography}{20}

\bibitem{barv} A. Barvinok, {\sl A Course in Convexity}, Graduate Studies in Mathematics, vol. 54, American Mathematical Society, Providence, Rhode Island, 2002.

\bibitem{lifting} A. Basu, M. Campelo, M. Conforti, G. Cornu\'ejols, G. Zambelli, On Lifting Integer Variables in Minimal Inequalities, {\sl
IPCO 2010}, Lausanne, Switzerland, {\sl LNCS 6080} (2010) 85--95.


\bibitem{bcccz2} A. Basu, M. Campelo, M. Conforti, G. Cornu\'ejols, G. Zambelli, Unique Liftings of Integer Variables in Minimal Inequalities, manuscript available at: {\tt http://www.math.ucdavis.edu/\~{}abasu}.


\bibitem{BaCoCoZaM09} A. Basu, M. Conforti, G. Cornu\'ejols, G. Zambelli,
Maximal Lattice-Free Convex Sets in Linear Subspaces, {\em Mathematics of Operations Research 35\/} (2010)
704--720.

\bibitem{bcm} A. Basu, G. Cornu\'ejols, F. Margot, Intersection Cuts with Infinite Split Rank, manuscript available at: {\tt http://www.math.ucdavis.edu/\~{}abasu}.

\bibitem{BorCor} V. Borozan, G. Cornu\'ejols, Minimal Valid Inequalities for Integer
    Constraints, {\sl Mathematics of Operations Research 34} (2009) 538--546.

\bibitem{ccz} M. Conforti, G. Cornu\'ejols, G. Zambelli, A Geometric Perspective on Lifting, to appear in {\sl Operations Research}.

\bibitem{dw} S.~S. Dey, L.~A. Wolsey, Lifting Integer Variables in
Minimal Inequalities Corresponding to Lattice-Free Triangles, {\sl
IPCO 2008}, Bertinoro, Italy, {\sl LNCS 5035} (2008) 463--475.

\bibitem{gruber} P. M. Gruber, C. G. Lekkerkerker, {\sl Geometry of Numbers}, North-Holland, 2nd ed., (1987).

    \bibitem{lo} L. Lov\'asz, Geometry of Numbers and Integer Programming, {\sl Mathematical Programming: Recent Developments and Applications}, M. Iri and K. Tanabe eds., Kluwer (1989), 177--210.

\end{thebibliography}
\end{document}